\theoremstyle{plain}
\newtheorem{thm}{Theorem}[section]
\newtheorem{theorem}[thm]{Theorem}
\newtheorem*{theorem*}{Theorem}
\newtheorem{lemma}[thm]{Lemma}
\newtheorem{proposition}[thm]{Proposition}
\newtheorem{corollary}[thm]{Corollary}
\newtheorem{aside}[thm]{Aside}
\theoremstyle{definition}
\newtheorem{remark}[thm]{Remark}
\newtheorem{example}[thm]{Example}
\newtheorem{question}[thm]{Question}
\newtheorem{conjecture}[thm]{Conjecture}
\renewcommand\ge{\geqslant}
\renewcommand\geq{\geqslant}
\renewcommand\le{\leqslant}
\renewcommand\leq{\leqslant}
\newcommand\longto{\longrightarrow}
\newcommand\numequiv{\equiv_{\rm num}}
\newcommand\be{\begin{eqnarray*}}
\newcommand\ee{\end{eqnarray*}}
\newcommand\compact{\itemsep=0cm \parskip=0cm}
\newcommand\engqq[1]{``#1''}
\newcommand\Q{\mathbb Q}
\newcommand\newop[2]{\def#1{\mathop{\rm #2}\nolimits}}
\newop\log{log}
\newop\ord{ord}
\newop\Gal{Gal}
\newop\SL{SL}
\newop\mult{mult}
\newop\nr{nr}
\newop\Aut{Aut}
\newcommand\eqnref[1]{(\ref{#1})}
\newcommand{\olc}{\ensuremath{\Omega_X^1(\log C)}}
\begin{document}

\author{Th.~Bauer, B.~Harbourne\footnote{Brian Harbourne was partially supported by National Security Agency 
Grant/Cooperative
agreement H98230-11-1-0139, and thus
the United States Government is authorized to reproduce and distribute reprints
notwithstanding any copyright notice.}, 
A.~L.~Knutsen, A.~K\"uronya\footnote{Alex K\"uronya was partially supported by  the DFG-Forschergruppe 790
``Classification of Algebraic Surfaces and Compact Complex Manifolds'', and the OTKA Grants 77476, 77604, and 81203 by the Hungarian Academy of Sciences.},
\\S.~M\"uller-Stach, X.~Roulleau\footnote{Xavier Roulleau is a member of the project Geometria Algebrica PTDC/MAT/099275/2008 and was supported by FCT grant
SFRH/BPD/72719/2010.}, T.~Szemberg\footnote{Tomasz Szemberg was partially supported by NCN grant UMO-2011/01/B/ST1/04875.}}
\title{Negative curves on algebraic surfaces}
\date{April 3, 2012}
\maketitle
\thispagestyle{empty}

\begin{abstract}
   We study curves of negative self-intersection on algebraic surfaces.
   In contrast to what occurs in positive characteristics,
   it turns out that any smooth complex projective surface $X$
 with a surjective non-isomorphic endomorphism has bounded negativity
 (i.e., that $C^2$ is bounded below for prime divisors $C$ on $X$).
    We prove the same statement  for Shimura curves on Hilbert modular surfaces.
    As a byproduct we obtain   that there
    exist only finitely many smooth Shimura curves on a given Hilbert modular
    surface.  We also show that any set of curves of bounded genus
 on a smooth complex projective surface
 must have bounded negativity.
 \end{abstract}


\section{Introduction}

   In recent years there has been a lot of progress in
   understanding various notions and concepts of positivity
   \cite{PAG}. In the present note we go in the opposite
   direction and study negative curves on complex algebraic
   surfaces. By a negative curve we will always mean a reduced,
   irreducible curve with negative self-intersection.

The results we present here were motivated by the study of
   an old folklore conjecture, sometimes referred to as the
   Bounded Negativity Conjecture, that we may state as follows.

\begin{conjecture}[Bounded Negativity Conjecture]
   For each smooth complex projective surface $X$ there
   exists a number $b(X)\geq0$ such that
   $C^2\geq -b(X)$ for every negative curve $C\subset X$.
\end{conjecture}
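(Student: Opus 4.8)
I do not expect to prove the conjecture in full --- bounded negativity is a well-known open problem on surfaces. The realistic plan is to establish it in the cases where the naive adjunction estimate can be reinforced by extra geometry, and to say precisely where the argument stalls.

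The naive estimate: for a negative curve $C\subset X$,
\[
   -C^2 \;=\; 2-2p_a(C)+K_X\cdot C \;\le\; 2+K_X\cdot C ,
\]
so bounded negativity for $X$ is \emph{equivalent} to a uniform upper bound on $K_X\cdot C$ over negative curves. This settles at once every $X$ with $-K_X$ nef --- rational and suitable ruled surfaces, abelian, bielliptic, Enriques and K3 surfaces --- where $K_X\cdot C\le 0$, so $b(X)=2$ works; and it reduces the problem to $\kappa(X)\ge 0$. There one writes $K_X=P+N$ for the Zariski decomposition ($P$ nef), observes that a negative curve $C$ outside the finitely many components of $N$ has $N\cdot C\ge 0$, and so must bound $P\cdot C$ from above.

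The three arguments that I would then carry out, and which actually close, are the following. First, \emph{curves of bounded genus}: restricting to negative curves $C$ with bounded geometric genus, one extracts a bound on $K_X\cdot C$ from the genus --- for $\kappa(X)\le 1$ via the fibration structure of $X$ (on an elliptic surface a multisection of bounded genus cannot have too negative a self-intersection, by adjunction against the fibres and Riemann--Hurwitz), and for $\kappa(X)=2$ via boundedness of curves of bounded geometric genus on a surface of general type, in the style of Bogomolov, which forces the negative ones to be finite in number. Second, \emph{surjective non-isomorphic endomorphisms}: such an $f\colon X\to X$ of degree $d>1$ scales intersection numbers, $f^*D\cdot f^*D'=d\,(D\cdot D')$; iterating $f^*$ on a hypothetical sequence of ever more negative curves and weighing the multiplicities of the components of $f^*C$ against the eigenvalue growth of $f^*$ on the finite-dimensional $\mathrm{NS}(X)_{\mathbb R}$ yields a contradiction --- equivalently, one invokes the classification of surfaces carrying such an $f$, all of which reduce (up to a finite cover) to the easy cases above. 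This is where characteristic $0$ is used: in characteristic $p$ bounded negativity fails, e.g.\ on $C\times C$ the graphs of the $p^n$-power Frobenius are prime divisors with self-intersection $-(2g(C)-2)p^n\to-\infty$. Third, \emph{Shimura curves on Hilbert modular surfaces}: these curves are totally geodesic, and a Hirzebruch--H\"ofer-type proportionality on the compactified surface $Y$ computes both $K_Y\cdot C$ and $p_a(C)$ as (essentially) the normalized hyperbolic volume of $C$, so that in $-C^2=2-2p_a(C)+K_Y\cdot C$ the leading terms cancel and $C^2$ stays bounded; a \emph{smooth} such curve then has bounded volume, and geodesic curves of bounded volume on a fixed arithmetic surface are finite in number, which gives the finiteness of smooth Shimura curves on $Y$.

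The hard part --- the reason the conjecture is still open --- is everything these arguments leave untouched: a general surface of general type with Picard number $\ge 2$, with no endomorphism and no arithmetic structure, and, decisively, no control over the singularities of its curves, so that $p_a(C)$ is unbounded even along sequences of rational curves and the bounded-genus argument does not apply. Bounding $P\cdot C$ in that generality seems to require a genuinely new ingredient, such as a uniform bound on the multiplicities of negative curves at their singular points, or a positivity property of \olc strong enough to constrain $C^2$ directly; this is exactly the gap I do not see how to cross, and it is why the present paper confines itself to the three situations above.
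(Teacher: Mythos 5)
You are right not to attempt a proof: this statement is the open conjecture that motivates the paper, and the paper itself offers no proof of it --- only the partial results in Sections 2--5. So the honest framing is exactly correct, and your list of tractable cases coincides with what the paper actually establishes.

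On the individual cases your sketches are close to, but not identical with, the paper's arguments. For bounded genus the paper does not split by Kodaira dimension: Theorem \ref{thm:WBNC} gives the single uniform bound $C^2\ge c_1^2(X)-3c_2(X)+2-2g(C)$ for all $\kappa(X)\ge 0$, proved by pushing the logarithmic Miyaoka--Yau inequality (Corollary \ref{cor:LMY}) through an embedded resolution via Lemma \ref{lem:blowinv}; your fibration argument for $\kappa\le 1$ is not needed. For endomorphisms, be aware that your first suggestion --- iterating $f^*$ on $\mathrm{NS}(X)_{\mathbb R}$ and comparing multiplicities with eigenvalue growth --- does not close on its own: the pullback of a negative curve need not contain a more negative prime divisor, and no contradiction falls out of the linear algebra; this is precisely why the paper instead invokes the Fujimoto--Nakayama classification (Proposition \ref{firstprop}) and then disposes of each class separately, the only nontrivial one being elliptic surfaces with $e(X)=0$, where Proposition \ref{prop-elliptic} shows via Hurwitz that there are \emph{no} negative curves at all. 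Your hedge ``equivalently, one invokes the classification'' is the actual proof; the two are not equivalent. For Shimura curves the paper combines Hirzebruch--H\"ofer proportionality with Miyaoka's orbifold inequality \eqref{eq:M-orbifold} to get the explicit bound $C^2\ge -6c_2$ and deduces finiteness from boundedness of families of curves with bounded $g$ and bounded $K_XC$ (Proposition \ref{pro:We-have-:For}); your appeal to finiteness of geodesics of bounded volume is a different and less elementary route, and you would need to justify it.

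One genuine error: your opening claim that bounded negativity is \emph{equivalent} to a uniform upper bound on $K_X\cdot C$ over negative curves is only one implication. Since $p_a(C)\ge 0$ for a reduced irreducible curve, adjunction gives $-C^2\le 2+K_X\cdot C$, so bounding $K_X\cdot C$ suffices; but conversely $K_X\cdot C=2p_a(C)-2-C^2$ can be unbounded along negative curves of bounded self-intersection if their arithmetic genera grow, so bounded negativity does not bound $K_X\cdot C$. This does not affect your (correctly identified) partial results, but the reduction ``bounded negativity $\Leftrightarrow$ bounded canonical degree'' is not available and should not be the stated strategy.
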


The origins of this conjecture are unclear, but
it has a long oral tradition. (M.~Artin mentioned it to the second author
no later than about 1980, and we recently learned that
F.~Enriques had mentioned the conjecture to his last student,
A.~Franchetta, who in turn mentioned it to his student, C.~Ciliberto.
Ciliberto also recalls Franchetta discussing the problem
with E.~Bombieri during a trip to Naples many years ago.)
For recent references to the conjecture, see
\cite{Fon10}, \cite[Conjecture 1.2.1]{Har10}, and \cite[Question p.~24]{Har09}.
   While the occurence of smooth complex surfaces having curves
   of arbitrarily negative self-intersection still remains
   mysterious, we present here related results that arose
   from our attempts to decide the validity of the conjecture.

   It has been known for a long time that there are algebraic surfaces with
   infinitely many negative curves,
   the simplest examples being the projective plane blown up in the base
   locus of a general elliptic pencil or certain elliptic K3 surfaces. In the first
   example all negative curves have self-intersection $-1$,
   in the second example the self-intersection is $-2$, but in both cases the
   negative curves all are rational.
   In characteristic $p>0$, surfaces with negative curves
   of arbitrarily negative self-intersection have also been known for some time
   (see \cite[Exercise V.1.10]{Har77}), but the curves in these examples
   all have the same genus, and result from surjective endomorphisms
   (coming from powers of the Frobenius) of the surface
   containing them.

At this point the following questions appear to be quite natural.

   \begin{itemize}
   \item[(1)] Can one construct examples over the complex numbers of surfaces with surjective endomorphisms
   that result in negative curves
   of arbitrarily negative self-intersection?
   \item[(2)] More generally, what happens if we  replace  endomorphisms by correspondences?
   \item[(3)] Is it possible to have a surface $X$ with infinitely many
   negative curves $C$ of bounded genus such that $C^2$ is not bounded from below?
   \item[(4)] For which $d<0$ (or $g\geq0$) is it possible to produce examples of surfaces $X$
   with infinitely many negative curves $C$ such that $C^2=d$ (or such that $C$
   has genus $g$)?
   \item[(5)] If there is a lower bound for the self-intersections of negative curves on
   a given surface $X$, is there also a lower bound for the self-intersections of reduced but not
   necessarily irreducible
   curves $C$ on $X$? If so, how are the bounds related?
   \end{itemize}

   In Section~\ref{sect-bndedneg} we answer the first and third
   questions. We show that over the complex numbers  a
   surface with a non-invertible surjective endomorphism  must have bounded negativity.
   In addition, we point out that
   bounding the genus of a set of curves on a given complex
   surface of non-negative Kodaira dimension immediately leads to a lower bound on  their
   self-intersections. (The latter result was first proved in
   \cite{LS}.)

   In Section~\ref{sect-Shimura} we study the second question in the particular case
   of quaternionic Shimura surfaces, which, as is well known, carry a large infinite algebra of
   Hecke correspondences. We prove  that the negativity of Shimura curves on quaternionic Shimura
   surfaces of Hilbert modular type is bounded and that there exist only finitely many negative curves.
   This implies immediately a result that seems to have escaped the attention so far,
   namely, that there are only finitely many smooth Shimura curves on any Shimura surface of Hilbert modular type.

   In Section~\ref{sect-inf} we address the fourth question above; we verify (see Theorem \ref{ThmB}) that
   for each integer $m>0$ there is a smooth projective complex surface
   containing infinitely many smooth irreducible curves of
   self-intersection $-m$, whose
   genus can be prescribed
   when $m\ge 2$.

   Finally, in
   Section~\ref{reducible-sect} we address Question~5, by giving a sharp lower
   bound on the self-intersections of reduced curves, 
   for surfaces for which the self-intersections of negative curves are bounded below.

\paragraph*{Acknowledgement.}
   We would like to thank Jan Hendrik Bruinier, Lawrence Ein, Friedrich Hirzebruch,
   Annette Huber-Klawitter, Jun-Muk Hwang, Nick Shepherd-Barron and Kang Zuo for useful discussions.
   Special thanks go to Fritz H\"ormann, who discovered that an earlier version
   of this manuscript relied on a published result that turns out to be false. This collaboration
   grew out of interactions during the Oberwolfach mini-workshop
   \engqq{Linear Series on Algebraic Varieties}. We thank the MFO for excellent working
   conditions. We also thank SFB/TRR 45 for financial support for visits among some of us and for
   the 2009 summer school in Krak\'ow, where some initial consideration of these problems occurred.


\section{Bounded Negativity}\label{sect-bndedneg}

   In positive characteristic there exist surfaces carrying a
   sequence of irreducible curves with self-intersection tending
   to negative  infinity (see \cite[Exercise V.1.10]{Har77}). These
   curves are constructed by taking iterative images of a negative
   curve under a surjective endomorphism of the surface.

   In more detail,
   the construction goes as follows. Let $C$ be a curve of
   genus $g\geq 2$ defined over an algebraically closed field $k$
   of characteristic $p$, let $X=C\times C$ be the product
   surface with  $\Delta\subset X$ the diagonal.
   Furthermore let $F:C\to C$ be the Frobenius homomorphism,
   defined by taking coordinates of a point on $C$ to their $p$-th powers.
   Then $G=id\times F$ is a surjective endomorphism of $X$.
   The self-intersections in the sequence of irreducible curves
   $\Delta, G(\Delta), G^2(\Delta),\dots$ tend to negative infinity.

   We now show that in characteristic zero
   it is not possible to construct a sequence of curves with
   unbounded negativity using endomorphisms as above.
   In fact we prove an even stronger statement: the existence
   of a non-trivial surjective endomorphism implies a bound
   on the negativity of self-intersections of curves on the surface.

\begin{proposition}\label{firstprop}
   Let $X$ a smooth projective complex surface admitting a surjective
   endomorphism that is not an isomorphism. Then $X$ has bounded
   negativity, i.e., there is a bound $b(X)$ such that
   $$
      C^2 \ge -b(X)
   $$
   for every reduced irreducible curve $C\subset X$.
\end{proposition}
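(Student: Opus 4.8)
The plan is to combine three ingredients: a surjective endomorphism of a projective variety is automatically finite; the induced operators $f^{*}$ and $f_{*}$ on $N^{1}(X)$ both multiply the intersection form by $\deg f$; and, because over $\mathbb{C}$ every dominant morphism is separable, the ramification formula reads $K_{X}=f^{*}K_{X}+R_{f}$ with $R_{f}$ an \emph{effective} divisor. The effectivity of $R_{f}$ is what must carry the argument: the characteristic-$p$ examples recalled above are built from the inseparable Frobenius, for which the analogue of $R_{f}$ is \emph{not} effective, and that is exactly the feature which fails in characteristic $0$.

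First I would check finiteness: if $f$ contracted a reduced irreducible curve $E$ then $f_{*}E=0$, so $E\cdot f^{*}D=0$ for every class $D$; but $f_{*}f^{*}=\deg(f)\cdot\mathrm{id}$ forces $f^{*}$ to be bijective on $N^{1}(X)_{\mathbb Q}$, whence $E$ would be numerically trivial, contradicting $E\cdot A>0$ for $A$ ample. So $f$ is finite (and flat, $X$ being smooth) of degree $d:=\deg f\ge 2$, $f^{*}f_{*}=f_{*}f^{*}=d\cdot\mathrm{id}$, $(f^{*}\alpha\cdot f^{*}\beta)=d\,(\alpha\cdot\beta)$, and, by the projection formula together with $f^{*}f_{*}=d\cdot\mathrm{id}$, also $(f_{*}\alpha\cdot f_{*}\beta)=d\,(\alpha\cdot\beta)$. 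Consequently $f_{*}$ is a linear automorphism of the Mori cone $\overline{NE}(X)$ preserving its set of extremal rays of negative self-intersection. Since a reduced irreducible curve $C$ with $C^{2}<0$ spans such an extremal ray, and that ray determines $C$, the endomorphism $f$ carries the set $\mathcal N$ of negative curves into $\mathcal N$ — indeed onto $\mathcal N$, because if $D\in\mathcal N$ and $f^{-1}(D)=\sum a_{i}D_{i}$ then $\sum a_{i}^{2}D_{i}^{2}\le\bigl(\sum a_{i}D_{i}\bigr)^{2}=(f^{*}D)^{2}=dD^{2}<0$, so some $D_{i}$ lies in $\mathcal N$ — with fibres of size $\le d$. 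Thus it suffices to bound $C^{2}$ from below over $\mathcal N$, a set on which $f$ acts as a finite-to-one surjection.

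Next I would feed in $K_{X}=f^{*}K_{X}+R_{f}$, $R_{f}\ge 0$, whose $n$-fold iterate is $K_{X}-f^{n*}K_{X}=\sum_{i=0}^{n-1}f^{i*}R_{f}\ge 0$. If $X$ were minimal of general type this is already impossible: $K_{X}$ and $f^{n*}K_{X}$ are both nef and $K_{X}-f^{n*}K_{X}\ge 0$, so $K_{X}^{2}\ge K_{X}\cdot f^{n*}K_{X}\ge(f^{n*}K_{X})^{2}=d^{n}K_{X}^{2}>K_{X}^{2}$, a contradiction. Hence $\kappa(X)\le 1$, and at this point I would invoke the classification of smooth projective surfaces carrying a non-invertible surjective endomorphism (Fujimoto, Nakayama, and others): up to a finite \'etale cover and contractions of $(-1)$-curves, $X$ is finitely covered by an abelian surface or by a product of an elliptic curve with another curve, or else $X$ is a ruled surface over a curve or a toric surface. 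In each case bounded negativity is classical: a surface finitely covered by an abelian surface or by such a product has \emph{no} negative curve at all (it contains no rational curve, $K$ is numerically trivial, so every irreducible $C$ has $C^{2}=2p_{a}(C)-2\ge 0$), while ruled surfaces over curves and toric surfaces have only finitely many negative curves; and a non-isomorphic surjective endomorphism is rigid enough that passing back through the \'etale cover and the blow-ups leaves only boundedly negative curves.

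The main obstacle is exactly this last reduction. The difficulty is structural: the Hodge index theorem only ever produces \emph{upper} bounds on $C^{2}$, whereas bounded negativity is a \emph{lower} bound, so no short, purely intersection-theoretic proof is visible and the real content is the classification of surfaces with non-invertible endomorphisms. A more self-contained alternative is to follow a forward orbit $C,f(C),f^{2}(C),\dots$ inside $\mathcal N$, using $R_{f^{n}}\cdot C\ge 0$ (once more the effectivity of $R_{f}$) together with adjunction $C_{k}^{2}=2p_{a}(C_{k})-2-K_{X}\cdot C_{k}$ and $p_{a}(C_{k})\ge 0$ to bound $C_{k}^{2}$ from below along the orbit as soon as $K_{X}\cdot C_{k}$ is bounded above; but making that bound \emph{uniform} over all of $\mathcal N$ is precisely where one is thrown back onto the structure theory.
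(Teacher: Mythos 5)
Your overall strategy is ultimately the paper's: everything reduces to the Fujimoto--Nakayama classification of surfaces admitting a non-invertible surjective endomorphism, and you are right that this classification carries the real content. Your preliminary reductions (finiteness of $f$, the action of $f_*$ on the set of negative curves, the exclusion of minimal surfaces of general type via $K_X=f^*K_X+R_f$ with $R_f$ effective) are correct but end up not being needed once the classification is invoked. Note that the classification as used in the paper is cleaner than the version you quote: $X$ itself is a toric surface, a $\mathbb{P}^1$-bundle, an abelian or hyperelliptic surface, or an elliptic surface with $\kappa(X)=1$ and $e(X)=0$; no ``up to contraction of $(-1)$-curves'' is needed. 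That is fortunate, because bounded negativity does not pass from a surface to its blow-ups (this is exactly what makes the general conjecture hard), so the reduction as you state it would not close.

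The genuine gap is in the last case, which in your formulation is the surfaces finitely covered by $E\times C$ with $g(C)\ge 2$, i.e.\ the elliptic surfaces with $\kappa=1$ and $e=0$. Your justification --- ``$K$ is numerically trivial, so every irreducible curve has $C^2=2p_a(C)-2\ge 0$'' --- fails there: on $E\times C$ one has $K\equiv(2g(C)-2)F$ with $F=E\times\{\mathrm{pt}\}$, so adjunction only gives $D^2=2p_a(D)-2-(2g(C)-2)(D\cdot F)$, and $D\cdot F$ is not a priori bounded. (Your argument is fine for the abelian/hyperelliptic case, where $K$ really is numerically trivial.) The paper has to work to rule out negative curves on elliptic surfaces with $e(X)=0$ (its Proposition~\ref{prop-elliptic}): take an embedded resolution of $C$, apply the Hurwitz formula to the induced degree-$n=C\cdot F$ cover $\widetilde C\to B$, compare $\deg R$ with $K_X\cdot C$ using the canonical bundle formula $K_X\numequiv(2g(B)-2)F+\sum(m_i-1)F_i$, and invoke an elementary lemma $K_{Z/X}\cdot\Gamma-\Gamma^2\ge 0$ for the resolution to conclude $C^2\ge 0$. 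Without some such argument your proof does not cover this case, and it is the only case in which anything nontrivial happens.
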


\begin{proof}
   It is a result of Fujimoto and Nakayama
   (\cite{Fuj02} and \cite{Nak02})
   that a surface $X$
   satisfying our  hypothesis is of one of the following
   types:
   \begin{itemize}\compact
   \item[(1)]
      $X$ is a toric surface;
   \item[(2)]
      $X$ is a $\mathbb P^1$-bundle;
   \item[(3)]
      $X$ is an abelian surface or a hyperelliptic surface;
   \item[(4)]
      $X$ is an elliptic surface with Kodaira dimension
      $\kappa(X)=1$ and topological Euler number $e(X)=0$.
   \end{itemize}
   In cases (1) and (2) the assertion is clear as $X$
   then carries only finitely many negative curves.
   In case (3) bounded negativity follows from
   the adjunction formula (cf.~\cite[Prop.~3.3.2]{LS}).
   Finally, bounded negativity for elliptic surfaces
   with Euler number zero
   will be established in
   Proposition~\ref{prop-elliptic}.
\end{proof}

\begin{proposition}\label{prop-elliptic}
   Let $X$ be a smooth projective complex  elliptic surface with $e(X)=0$. Then
   there are no negative curves on $X$.
\end{proposition}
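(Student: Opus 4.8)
The plan is to combine the canonical bundle formula for elliptic surfaces with the adjunction formula and a Riemann--Hurwitz estimate that keeps track of the multiple fibres. Write $f\colon X\to B$ for the elliptic fibration. First I would unwind the hypothesis $e(X)=0$. The topological Euler number of a relatively minimal elliptic surface is the sum over $b\in B$ of the Euler numbers of the reduced fibres, a fibre contributing $0$ exactly when its reduction is a smooth elliptic curve and a strictly positive amount otherwise; moreover $e$ increases by $1$ under every blow-up, and a relatively minimal elliptic surface has $e\ge 0$. Hence $e(X)=0$ forces $X$ to be relatively minimal and every fibre of $f$ to be a smooth elliptic curve, possibly taken with a multiplicity $m_i\ge 2$; in particular $\chi(\mathcal O_X)=e(X)/12=0$. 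The canonical bundle formula then reads $K_X\equiv(2g(B)-2)F+\sum_i(m_i-1)F_i$, where $F$ is a general fibre and $m_iF_i$ are the multiple fibres; since $m_iF_i\equiv F$ numerically, this says $K_X\equiv\lambda F$ with $\lambda=2g(B)-2+\sum_i\bigl(1-\tfrac1{m_i}\bigr)$, and consequently $C\cdot K_X=\lambda\,(C\cdot F)$ for every curve $C$.

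Now let $C\subset X$ be a reduced irreducible curve. If $C$ is contained in a fibre, then $C$ is the support of that (irreducible) fibre, so $C^2=0$ and there is nothing to prove. So assume $C$ is not contained in any fibre; then $f|_C\colon C\to B$ is finite surjective of degree $d=C\cdot F\ge 1$, hence $C\cdot K_X=d\lambda$, and adjunction gives
$$C^2\;=\;2p_a(C)-2-C\cdot K_X\;\ge\;2g(\widetilde C)-2-d\lambda ,$$
where $\nu\colon\widetilde C\to C$ is the normalization (using $p_a(C)\ge g(\widetilde C)$). Thus it suffices to prove the inequality $2g(\widetilde C)-2\ge d\lambda$.

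This inequality is the technical core of the argument, and the delicate point is the local behaviour of $\widetilde C\to B$ over the multiple fibres. Over a point $b_i\in B$ carrying the multiple fibre $m_iF_i$ one has $f^{*}b_i=m_iF_i$ as Cartier divisors, so the pullback $\nu^{*}(m_iF_i)=m_i\,\nu^{*}(F_i)$ has all its multiplicities divisible by $m_i$; equivalently, every point of $\widetilde C$ lying over $b_i$ has ramification index $\ge m_i$. Since these ramification indices sum to $d$, there are at most $d/m_i$ points of $\widetilde C$ over $b_i$, so the ramification contribution over $b_i$ is at least $d-d/m_i=d\bigl(1-\tfrac1{m_i}\bigr)$. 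Feeding this into Riemann--Hurwitz for the degree-$d$ map $\widetilde C\to B$ yields
$$2g(\widetilde C)-2\;=\;d\,(2g(B)-2)+\deg R\;\ge\;d\Bigl(2g(B)-2+\sum_i\bigl(1-\tfrac1{m_i}\bigr)\Bigr)\;=\;d\lambda ,$$
which is exactly what was needed; combined with the displayed adjunction inequality this gives $C^2\ge 0$, so $X$ carries no negative curve. I expect the main obstacle to be precisely this ramification estimate over the multiple fibres (together with checking the borderline cases, e.g.\ that a section automatically misses every multiple fibre because $1=C\cdot F=C\cdot(m_iF_i)=m_i(C\cdot F_i)$); once that is in place, the rest is routine bookkeeping with adjunction and the canonical bundle formula.
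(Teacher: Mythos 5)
Your proof is correct and follows the same basic strategy as the paper's: the canonical bundle formula for the multiple fibres, Riemann--Hurwitz for the induced covering of $B$, and adjunction. Two points differ in execution, both to your advantage. First, where the paper passes to an embedded resolution $\widetilde X\to X$ of the curve and then needs an auxiliary lemma ($K_{Z/X}\cdot\Gamma_{Z/X}-\Gamma_{Z/X}^2\ge 0$, proved by induction on the blow-ups) to compare the genus computation on $\widetilde X$ with $C^2$ on $X$, you work with the abstract normalization and absorb the same information into the single inequality $p_a(C)\ge g(\widetilde C)$; this is cleaner and avoids the surface-level bookkeeping. Second, the paper's key estimate $\sum_i(m_i-1)\,f^*F_i\cdot\widetilde C\le \deg R$ is asserted without justification, whereas you actually prove the corresponding statement: the divisibility of $\nu^*(m_iF_i)=m_i\nu^*F_i$ by $m_i$ forces every ramification index of $\widetilde C\to B$ over $b_i$ to be at least $m_i$, giving a contribution of at least $d\bigl(1-\tfrac1{m_i}\bigr)$ to $\deg R$. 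That local analysis is exactly the detail left implicit in the published argument, so your write-up is, if anything, more complete; the remaining steps (minimality from $e(X)=0$, the fibre-component case, and $C\cdot K_X=d\lambda$) all check out.
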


\begin{proof}
   Let $\pi:X\to B$ be an elliptic fibration,
   where
   $B$ is a smooth curve, and let $F$ be the class of a fiber of
   $\pi$. By the properties of $e(X)$ of a fibered surface
   (cf.~\cite[III, Proposition~11.4 and Remark~11.5]{BPV}),
   the only singular fibers of $X$ are possible multiple fibers,
   and the reduced fibers are always smooth elliptic curves.
   In particular, $X$ must be minimal and its fibers do not contain negative curves.

   Aiming at a contradiction, assume that $C\subset X$ is a negative curve. Then, by the above, the intersection number
   $n:=C\cdot F$ is positive. This means that $\pi$
   restricts to
   a map
   $C\to B$ of degree $n$.
   Taking an embedded resolution $f:\widetilde X\to X$
   of $C$, we get
   a smooth curve
   $\widetilde C=f^*C-\Gamma$, where the divisor $\Gamma$ is supported on the
   exceptional locus of $f$.
   The Hurwitz formula, applied to the induced covering
   $\widetilde C\to B$,
    yields
   \begin{equation}\label{eqn-hurwitz}
      2g(\widetilde C)-2=n\cdot(2g(B)-2)+\deg R \ ,
   \end{equation}
   where $R$ is the ramification divisor.

   Let $m_1F_1,\ldots,m_kF_k$ denote the multiple fibers of $\pi$.
   The assumption $e(X)=0$ implies via Noether's formula that
   $K_X\numequiv (2g(B)-2)F+ \sum(m_i-1)F_i$. Hence
   \begin{eqnarray*}
     K_X \cdot C &  =   & n(2g(B)-2)+ \sum(m_i-1)F_i \cdot C \\
                 &  =   & n(2g(B)-2)+ \sum(m_i-1)f^*F_i \cdot f^*C \\
                 &  =  &  n(2g(B)-2)+ \sum(m_i-1)f^*F_i \cdot \widetilde C \\
                 & \leq & n(2g(B)-2)+ \deg R.
   \end{eqnarray*}
   On the other hand,
   \begin{eqnarray*}
      2g(\widetilde C)-2 = \widetilde C^2+K_{\widetilde X}\cdot\widetilde C
         &=& (f^*C-\Gamma)^2+(f^*K_X+K_{\widetilde X/X})(f^*C-\Gamma) \\
         &=& C^2+\Gamma^2+K_X\cdot C-K_{\widetilde X/X}\cdot\Gamma.
   \end{eqnarray*}
   Consequently, using \eqref{eqn-hurwitz}, we obtain
   $$
      C^2 \geq K_{\widetilde X/X}\cdot\Gamma-\Gamma^2.
   $$
   The subsequent lemma yields the contradiction $C^2 \geq 0$.
\end{proof}

\begin{lemma}
   Let $f:Z\to X$ be a birational morphism of smooth
   projective surfaces, and let $C\subset X$ be any curve, with
   proper transform $\widetilde C=f^*C-\Gamma_{Z/X}$ on $Z$. Then
   $$
      K_{Z/X}\cdot\Gamma_{Z/X}-\Gamma_{Z/X}^2\ge 0.
   $$
\end{lemma}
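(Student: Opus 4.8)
The plan is to reduce the birational morphism $f$ to a composition of blow-ups at points and argue inductively, since every birational morphism between smooth projective surfaces factors as a finite sequence of point blow-ups. So write $f = f_1 \circ \cdots \circ f_r$, where each $f_i\colon Z_i \to Z_{i-1}$ is the blow-up of a single point $p_i \in Z_{i-1}$ (with $Z_0 = X$, $Z_r = Z$), and let $E_i$ be the total transform on $Z$ of the exceptional curve of $f_i$. Then $K_{Z/X} = \sum_i a_i E_i$ for suitable positive integers $a_i$, and more importantly the proper transform $\widetilde C$ of $C$ satisfies $f^*C = \widetilde C + \sum_i m_i E_i$, where $m_i = \mult_{p_i}(\text{proper transform of } C \text{ on } Z_{i-1})\ge 0$; thus $\Gamma_{Z/X} = \sum_i m_i E_i$. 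The quantity to control is $K_{Z/X}\cdot\Gamma_{Z/X} - \Gamma_{Z/X}^2 = (K_{Z/X} - \Gamma_{Z/X})\cdot\Gamma_{Z/X}$.

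The cleanest route is probably a direct induction on $r$. For $r=1$ we have $\Gamma_{Z/X} = m_1 E_1$ with $E_1^2 = -1$ and $K_{Z/X}\cdot E_1 = 1$, so $K_{Z/X}\cdot\Gamma_{Z/X} - \Gamma_{Z/X}^2 = m_1 + m_1^2 \ge 0$. For the inductive step, factor $f = g\circ f_r$ where $g\colon Z_{r-1}\to X$ is the composition of the first $r-1$ blow-ups and $f_r\colon Z\to Z_{r-1}$ is the last one. Let $\widetilde C'$ be the proper transform of $C$ on $Z_{r-1}$, so $\Gamma_{Z_{r-1}/X} =: \Gamma'$ and by induction $K_{Z_{r-1}/X}\cdot\Gamma' - (\Gamma')^2 \ge 0$. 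Now pull everything back by $f_r$: we have $K_{Z/X} = f_r^* K_{Z_{r-1}/X} + E$ where $E$ is the exceptional $(-1)$-curve of $f_r$, and $\Gamma_{Z/X} = f_r^*\Gamma' + m\,E$ where $m = \mult_{p_r}\widetilde C' \ge 0$. Substituting and using the projection formula together with $f_r^*(\text{anything})\cdot E = 0$, $E^2 = -1$, $K_{Z/X}\cdot E = 1$, one gets
$$
K_{Z/X}\cdot\Gamma_{Z/X} - \Gamma_{Z/X}^2 = \bigl(K_{Z_{r-1}/X}\cdot\Gamma' - (\Gamma')^2\bigr) + (m + m^2),
$$
which is $\ge 0$ by the inductive hypothesis; the extra term is nonnegative since $m\ge 0$.

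The only genuine subtlety — the step I'd expect to require the most care in writing up — is bookkeeping the relative canonical class and the exceptional divisor correctly under the iterated pullbacks, in particular making sure that the cross terms between $f_r^*(\text{classes from }Z_{r-1})$ and the new exceptional curve $E$ really do vanish, and that the formula $K_{Z/X} = f_r^*K_{Z_{r-1}/X} + E$ combines properly with the analogous formula one level down. Since $Z_{r-1}$ need not map to $X$ by a single blow-up, one should phrase the induction in terms of the relative canonical divisor $K_{Z_{r-1}/X}$ directly rather than re-deriving it from the discrepancies $a_i$; then the identity above is formal. An alternative, essentially equivalent, presentation is to expand $\Gamma_{Z/X}=\sum m_i E_i$ and $K_{Z/X}=\sum a_i E_i$ in the basis of exceptional classes, use the known intersection matrix $E_i\cdot E_j$ (which is negative definite) together with the relations $K_{Z/X}\cdot E_i \ge 0$ and $\widetilde C\cdot E_i \ge 0$ (the latter because $\widetilde C$ is an honest curve, not containing any $E_i$), and observe that $(K_{Z/X}-\Gamma_{Z/X})\cdot E_i = K_{Z/X}\cdot E_i - \Gamma_{Z/X}\cdot E_i = f^*C\cdot E_i - \widetilde C\cdot E_i + (K_{Z/X}\cdot E_i - f^*C\cdot E_i)$; but this gets combinatorially heavier than the clean two-line induction, so I would present the inductive argument.
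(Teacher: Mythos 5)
Your overall strategy is the same as the paper's: factor the birational morphism into point blow-ups and induct, using the projection formula to kill the cross terms between pulled-back classes and the new exceptional curve (the paper splits $f$ as $Z\to Y\to X$ and gets the clean additivity
$K_{Z/X}\cdot\Gamma_{Z/X}-\Gamma_{Z/X}^2=(K_{Z/Y}\cdot\Gamma_{Z/Y}-\Gamma_{Z/Y}^2)+(K_{Y/X}\cdot\Gamma_{Y/X}-\Gamma_{Y/X}^2)$; your peeling off of the last blow-up is the same idea in a special case). However, your computation contains a sign error that makes the displayed identities false as stated. For a single blow-up with exceptional curve $E$ one has $K_{Z/X}=E$ and hence $K_{Z/X}\cdot E=E^2=-1$, not $+1$. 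Consequently in the base case $K_{Z/X}\cdot\Gamma_{Z/X}-\Gamma_{Z/X}^2=-m_1+m_1^2=m_1(m_1-1)$, not $m_1+m_1^2$, and in the inductive step the correct identity is
$$
K_{Z/X}\cdot\Gamma_{Z/X}-\Gamma_{Z/X}^2=\bigl(K_{Z_{r-1}/X}\cdot\Gamma'-(\Gamma')^2\bigr)+m(m-1),
$$
since $(f_r^*K'+E)\cdot(f_r^*\Gamma'+mE)=K'\cdot\Gamma'-m$ and $(f_r^*\Gamma'+mE)^2=(\Gamma')^2-m^2$.

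This matters for the justification: $m(m-1)\ge 0$ does \emph{not} follow from $m\ge 0$ alone (it fails for $0<m<1$); you need that $m$ is the multiplicity of the proper transform at the blown-up point, hence a nonnegative \emph{integer}, so $m=0$ or $m\ge 1$. With that observation the proof closes and is essentially the paper's argument. A smaller remark: in your alternative sketch the relation $K_{Z/X}\cdot E_i\ge 0$ is also false (with $E_i$ the total transforms one has $K_{Z/X}\cdot E_i=-a_i<0$), so that route would need reworking; the inductive argument, with the sign fixed, is the right presentation.
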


\begin{proof}
   As $f$ is a finite composition of blow-ups, this
   can be seen by an elementary inductive argument.
   For the convenience of the reader we briefly indicate it.
   Suppose that $f$ consists of $k$ successive blow-ups.
   For $k=1$ the assertion is clear, since then
   $K_{Z/X}$ is the exceptional divisor $E$, and $\Gamma$
   is the divisor $mE$, where $m$ is the multiplicity of $C$ at
   the blown-up point. For $k>1$ we may decompose $f$ into two
   maps
   $$
      Z\stackrel g\longto Y\stackrel h\longto X.
   $$
   One has proper transforms
   $$
      C'=h^*C-\Gamma_{Y/X}
      \quad\mbox{and}\quad
      \widetilde C=g^*C'-\Gamma_{Z/Y}=f^*C-\Gamma_{Z/X}.
   $$
   The equalities
   \be
      K_{Z/X}&=&K_{Z/Y}+g^*K_{Y/X} \\
      \Gamma_{Z/X}&=&\Gamma_{Z/Y}+g^*\Gamma_{Y/X}
   \ee
   then imply
   \be
      K_{Z/X}\cdot\Gamma_{Z/X}-\Gamma_{Z/X}^2
         &=&(K_{Z/Y}+g^*K_{Y/X})(\Gamma_{Z/Y}+g^*\Gamma_{Y/X})
         -(\Gamma_{Z/Y}+g^*\Gamma_{Y/X})^2 \\
      &=& (K_{Z/Y}\cdot \Gamma_{Z/Y}
         - \Gamma_{Z/Y}^2)
         + (K_{Y/X}\cdot \Gamma_{Y/X}
         - \Gamma_{Y/X}^2),
   \ee
   and the assertion follows by induction.
\end{proof}

We now consider Question 3 of the introduction.
The first general result known to us answering this question
   is due to Bogomolov. It says that on a surface $X$ of general type
   with $c_1^2(X)>c_2(X)$ curves of a fixed geometric genus lie in a bounded
   family. This implies of course that their numeric invariants, in particular their self-intersections,
   are bounded. An effective version of Bogomolov's result
   was obtained by Lu and Miyaoka \cite[Theorem 1 (1)]{LuMiy95}.
   Their proof relies on Corollary \ref{cor:LMY}. We state here a more
   general result due to Miyaoka \cite[Theorem 1.3 i), ii)]{Miyaoka}, as we need it anyway in the next section.
\begin{theorem}\label{pro:By-[Miyaoka,-The}
   Let $X$ be a surface of non-negative Kodaira dimension and let $C$ be an irreducible
   curve of geometric genus $g$ on $X$. Then
\begin{equation}\label{eq:M-orbifold}
\frac{\alpha^{2}}{2}(C^{2}+3CK_{X}-6g+6)-2\alpha(CK_{X}-3g+3)+3c_{2}-K_{X}^{2}\,\geq\, 0
\end{equation}
   for all $\alpha\in[0,1]$.

   Moreover, if $C\not\simeq\mathbb{P}^{1}$,
   and $K_{X}C>3g-3$ then
\begin{equation}\label{eq:M-orbifold non rational}
2(K_{X}C-3g+3)^{2}-(3c_{2}-K_{X}^{2})(C^{2}+3CK_{X}-6g+6)\leq0.
\end{equation}
\end{theorem}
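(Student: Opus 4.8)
The plan is to recognize \eqref{eq:M-orbifold} as the orbibundle Miyaoka--Yau--Sakai inequality applied to the parabolic cotangent bundle of $X$ carrying weight $\alpha$ along $C$, to quote that inequality from Miyaoka, and then to extract \eqref{eq:M-orbifold non rational} from \eqref{eq:M-orbifold} by an elementary analysis of the quadratic in $\alpha$. To see the shape of \eqref{eq:M-orbifold} it is cleanest to begin with $C$ smooth of genus $g$: adjunction then gives $C^{2}+CK_{X}=2g-2$, and, attaching to the pair $(X,\alpha C)$ the orbifold Chern numbers $\bar c_1^{2}=(K_{X}+\alpha C)^{2}$ and $\bar c_2=c_{2}(X)-\alpha\,e(C)=c_{2}(X)-\alpha(2-2g)$, the orbifold Bogomolov--Miyaoka--Yau inequality $\bar c_1^{2}\le 3\bar c_2$ becomes
\[
   K_{X}^{2}+2\alpha K_{X}C+\alpha^{2}C^{2}\ \le\ 3c_{2}-3\alpha(2-2g);
\]
substituting $C^{2}=2g-2-CK_{X}$ in the coefficient of $\alpha^{2}$ and rearranging turns this into exactly \eqref{eq:M-orbifold}, and at $\alpha=0$ one recovers the Miyaoka--Yau bound $K_{X}^{2}\le 3c_{2}$ (valid because $\kappa(X)\ge 0$: it holds on a minimal model and only improves under blow-ups). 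For singular $C$ the same manipulation works once one uses the orbifold invariants of the parabolic bundle $\Omega^{1}_{X}\langle\alpha C\rangle$; the local contributions at the singularities of $C$ are precisely what turns the arithmetic genus into the geometric genus $g$ in \eqref{eq:M-orbifold}. (Equivalently, one may pass to an embedded resolution and distribute suitable orbifold weights on the exceptional curves, which amounts to the same bookkeeping.)

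The substantive input, which I would take from Miyaoka rather than reprove, is the orbibundle inequality itself. Its proof rests on Miyaoka's generic semipositivity theorem: when $\kappa(X)\ge 0$ the parabolic cotangent bundle is generically nef, and this is exactly what removes any need to assume that $K_{X}+\alpha C$ is nef --- only $\kappa(X)\ge 0$ enters the hypotheses. Granting generic nefness together with parabolic semistability, a Bogomolov-type argument first yields the weaker bound $\bar c_1^{2}\le 4\bar c_2$, and the sharpening of Bogomolov's constant $4$ to the optimal $3$ --- via Miyaoka's ramified-covering construction and the analysis of symmetric powers of the (parabolic) cotangent bundle --- is the genuinely hard step. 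I expect this to be the only real obstacle in the argument; I would cite \cite{Miyaoka} for it and treat the reduction to the smooth case and the deduction below as routine.

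It remains to derive \eqref{eq:M-orbifold non rational} from \eqref{eq:M-orbifold}. Write the left-hand side of \eqref{eq:M-orbifold} as $q(\alpha)=a\alpha^{2}+b\alpha+c$ with $a=\tfrac12(C^{2}+3CK_{X}-6g+6)$, $b=-2(CK_{X}-3g+3)$ and $c=3c_{2}-K_{X}^{2}$; then \eqref{eq:M-orbifold non rational} is exactly the assertion that the discriminant $b^{2}-4ac$ is $\le 0$. Assume $C\not\simeq\mathbb P^{1}$ and $CK_{X}>3g-3$. Since $C$ is irreducible and not isomorphic to $\mathbb P^{1}$, its arithmetic genus satisfies $p_{a}(C)\ge 1$, so $C^{2}+CK_{X}=2p_{a}(C)-2\ge 0$; combined with $CK_{X}-3g+3>0$ this gives $2a=(C^{2}+CK_{X})+2(CK_{X}-3g+3)>0$, so $q$ is an upward-opening parabola, and its vertex $-b/(2a)=\tfrac{2(CK_{X}-3g+3)}{C^{2}+3CK_{X}-6g+6}$ lies in $[0,1]$ (the numerator does not exceed the denominator, their difference being $C^{2}+CK_{X}\ge 0$). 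Hence the global minimum of $q$ over $\mathbb R$ is attained inside $[0,1]$, where $q\ge 0$ by \eqref{eq:M-orbifold}; therefore $q\ge 0$ on all of $\mathbb R$, which for $a>0$ means $b^{2}-4ac\le 0$ --- that is, \eqref{eq:M-orbifold non rational}.
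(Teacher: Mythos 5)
Your proposal is sound, but it is worth knowing that the paper itself offers no proof of this statement at all: both inequalities are quoted verbatim from Miyaoka \cite[Theorem 1.3 i), ii)]{Miyaoka}, so the entire mathematical content here is a citation. What you add beyond that citation is correct and genuinely worthwhile. Your bookkeeping for the smooth case checks out: with $\bar c_1^2=(K_X+\alpha C)^2$ and $\bar c_2=c_2(X)-\alpha(2-2g)$, adjunction gives $C^2+3CK_X-6g+6=-2C^2$, so $\bar c_1^2\le 3\bar c_2$ is precisely \eqref{eq:M-orbifold}; and your deduction of \eqref{eq:M-orbifold non rational} from \eqref{eq:M-orbifold} is a complete, correct argument that the paper does not supply (it cites part (ii) as independent input). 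Indeed, with $q(\alpha)=a\alpha^2+b\alpha+c$ as you set it up, $b^2-4ac$ equals twice the left-hand side of \eqref{eq:M-orbifold non rational}; the hypotheses $C\not\simeq\mathbb P^1$ and $K_XC>3g-3$ give $2a=(C^2+CK_X)+2(K_XC-3g+3)>0$ via $p_a(C)\ge 1$, and the vertex $-b/(2a)$ lands in $(0,1]$ because the denominator exceeds the numerator by $C^2+CK_X\ge 0$, so nonnegativity on $[0,1]$ forces nonnegativity on all of $\mathbb R$ and hence a nonpositive discriminant. The one caveat is that the genuinely hard content --- the orbibundle Miyaoka--Yau--Sakai inequality under only $\kappa(X)\ge 0$, including the treatment of singular $C$ where the local orbifold contributions convert arithmetic into geometric genus --- remains a black box in your write-up exactly as it does in the paper; you are candid about this, and citing \cite{Miyaoka} for it is the same move the authors make.
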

   Putting $\alpha=1$ in \eqnref{eq:M-orbifold}, we recover the classical
   logarithmic Miyaoka-Yau inequality
   (see also \cite[Appendix]{LS} for a complete direct proof).
\begin{corollary}[Logarithmic Miyaoka-Yau inequality]\label{cor:LMY}
   Let $X$ be a smooth projective surface of non-negative
   Kodaira dimension and let $C$ be a smooth curve
   on $X$. Then
   $$
      c_1^2(\olc)\leq 3c_2(\olc),
   $$
   equivalently $(K_X+C)^2\leq 3\left(c_2(X)-2+2g(C)\right)$.
\end{corollary}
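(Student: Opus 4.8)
\emph{Proof proposal.} The plan is to obtain the stated inequality by simply specializing Theorem~\ref{pro:By-[Miyaoka,-The} to the endpoint $\alpha=1$ and then rewriting the resulting numerical inequality by means of adjunction, using throughout that $C$ is smooth (so its geometric genus equals its arithmetic genus and $C^2+C\cdot K_X=2g-2$ holds with no correction term). Write $c_2=c_2(X)$ and $g=g(C)$.

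First I would set $\alpha=1$ in \eqnref{eq:M-orbifold}, clear the denominator and collect terms; the inequality becomes
$$C^2-C\cdot K_X-2K_X^2+6c_2+6g-6\ \ge\ 0.$$
Substituting $C^2=2g-2-C\cdot K_X$ from adjunction and dividing by two, this simplifies to $K_X^2+C\cdot K_X\le 3c_2+4g-4$. Applying adjunction a second time gives $(K_X+C)^2=K_X^2+2C\cdot K_X+C^2=K_X^2+C\cdot K_X+2g-2$, and combining the last two displays yields
$$(K_X+C)^2\ \le\ 3c_2+4g-4+2g-2\ =\ 3\bigl(c_2(X)-2+2g(C)\bigr),$$
which is the second formulation in the statement.

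It remains to see that this is the same as $c_1^2(\olc)\le 3c_2(\olc)$, which is pure bookkeeping with the logarithmic cotangent bundle of a smooth divisor: from the residue sequence $0\to\Omega^1_X\to\olc\to\calo_C\to 0$ one reads off $c_1(\olc)=K_X+C$ and $c_2(\olc)=c_2(X)+C\cdot(C+K_X)=c_2(X)-2+2g(C)$, and substituting these into the displayed bound recovers the Chern-class form.

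I do not expect a genuine obstacle here: all the substance sits in Miyaoka's orbifold inequality (Theorem~\ref{pro:By-[Miyaoka,-The}), and the corollary is a one-line specialization plus routine algebra. The only point that needs a word of care is the use of adjunction in the clean form $C^2+C\cdot K_X=2g-2$, which is exactly where the smoothness hypothesis on $C$ enters; for a singular $C$ one would instead have to pass to the normalization and work with the geometric genus, and the identity above would fail. For a reader who would rather not invoke the full orbifold Bogomolov--Miyaoka--Yau inequality, I would also point to the self-contained argument in \cite[Appendix]{LS}.
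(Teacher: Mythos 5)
Your proposal is correct and follows exactly the paper's route: the paper proves the corollary by setting $\alpha=1$ in Theorem~\ref{pro:By-[Miyaoka,-The} (citing \cite[Appendix]{LS} for a direct proof), and your write-up just makes explicit the adjunction substitution $C^2+C\cdot K_X=2g-2$ and the Chern-class identities $c_1(\olc)=K_X+C$, $c_2(\olc)=c_2(X)+C\cdot(C+K_X)$ from the residue sequence, all of which check out. Your remark that smoothness of $C$ is precisely what licenses the clean adjunction (versus geometric genus for singular curves) is accurate.
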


   We recall
   here a statement that is numerically slightly weaker than the result of
   Lu and Miyaoka \cite[Theorem 1 (1)]{LuMiy95} but which has a simpler proof.
   This result appeared first in
   \cite[Proposition~3.5.3]{LS}, and we refer to that article for a more detailed exposition.
\begin{theorem}[Proposition~3.5.3 of \cite{LS}]\label{thm:WBNC}
   Let $X$ be a smooth projective surface with $\kappa(X)\geq 0$.
   Then for every reduced, irreducible curve $C\subset X$ of geometric genus $g(C)$ we have
   \begin{equation}\label{eq:weakbound}
      C^2\geq c_1^2(X)-3c_2(X)+2-2g(C).
   \end{equation}
\end{theorem}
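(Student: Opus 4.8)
The plan is to derive the bound \eqref{eq:weakbound} directly from the logarithmic Miyaoka--Yau inequality of Corollary~\ref{cor:LMY}, after replacing $C$ by a smooth model. First I would take an embedded resolution $f:\widetilde X\to X$ of $C$, so that the proper transform $\widetilde C=f^*C-\Gamma$ is smooth of genus $g(C)$, with $\Gamma$ supported on the exceptional locus of $f$. Since blowing up does not decrease the Kodaira dimension, $\widetilde X$ still has $\kappa(\widetilde X)\ge 0$, so Corollary~\ref{cor:LMY} applies on $\widetilde X$ to the smooth curve $\widetilde C$, giving
\[
   (K_{\widetilde X}+\widetilde C)^2\le 3\bigl(c_2(\widetilde X)-2+2g(C)\bigr).
\]

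Next I would translate both sides back to $X$. Writing $\widetilde C=f^*C-\Gamma$ and $K_{\widetilde X}=f^*K_X+K_{\widetilde X/X}$, expanding $(K_{\widetilde X}+\widetilde C)^2$ and using the projection formula together with the fact that $K_{\widetilde X/X}$ and $\Gamma$ are orthogonal to pullbacks, one obtains an expression of the form $(K_X+C)^2$ plus correction terms coming from $\Gamma$, $K_{\widetilde X/X}$, and their intersections. On the characteristic-number side, $c_2(\widetilde X)=c_2(X)+r$ where $r$ is the number of blow-ups. The key point is that these two collections of correction terms should combine in the right direction: the worst-case contribution of the $\Gamma$-terms to the left-hand side is controlled by $K_{\widetilde X/X}\cdot\Gamma-\Gamma^2\ge 0$ — exactly the quantity estimated in the Lemma above — so that after rearranging, the blown-up terms only help the inequality, and one recovers \eqref{eq:weakbound} for $C$ on $X$. (This is the same bookkeeping already carried out inside the proof of Proposition~\ref{prop-elliptic}, so I would reuse it.)

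The main obstacle is the careful sign accounting in this reduction: one must check that every correction term arising from the resolution has the sign needed to preserve the inequality, and in particular that the multiplicities $m$ at the blown-up points, which enter $\widetilde C^2=C^2-\sum m^2$ quadratically and $K_{\widetilde X}\cdot\widetilde C$ only linearly, do not degrade the bound. Concretely, one wants the net effect of passing from $C$ to $\widetilde C$ to be a shift by at most $c_1^2(\widetilde X)-c_1^2(X)$ balanced against $3(c_2(\widetilde X)-c_2(X))$, and the Lemma's inequality $K_{\widetilde X/X}\cdot\Gamma-\Gamma^2\ge 0$ is precisely what makes the balance work. Once that is verified, the statement follows; alternatively, and perhaps more cleanly, one can avoid resolution altogether by simply specializing Miyaoka's orbifold inequality \eqref{eq:M-orbifold} at a suitable value of $\alpha$ and rearranging, but the resolution argument has the advantage of being self-contained given only Corollary~\ref{cor:LMY}.
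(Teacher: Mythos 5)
Your overall route is exactly the paper's: take an embedded resolution $f:\widetilde X\to X$, apply Corollary~\ref{cor:LMY} to the smooth proper transform $\widetilde C$ on $\widetilde X$, and transfer the resulting bound back to $X$. The genuine gap is in the mechanism you name for the transfer. Writing the resolution as $r$ blow-ups at (possibly infinitely near) points of multiplicity $m_i$ of the successive proper transforms, one has $\widetilde C^2=C^2-\sum_i m_i^2$, $c_1^2(\widetilde X)=c_1^2(X)-r$, $c_2(\widetilde X)=c_2(X)+r$ and $g(\widetilde C)=g(C)$, so the inequality \eqref{eq:weakbound} proved on $\widetilde X$ descends to $X$ precisely when
\[
\sum_i\bigl(m_i^2-4\bigr)\;\ge\;0,
\]
i.e.\ when each blow-up contributes $m_i^2\ge 4$ against the shift of $-4$ in $c_1^2-3c_2$ per blow-up. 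The quantity you invoke satisfies $K_{\widetilde X/X}\cdot\Gamma-\Gamma^2=\sum_i m_i(m_i-1)$, which is nonnegative for \emph{every} $m_i\ge 1$ and therefore cannot be the decisive input: a blow-up at a smooth point of the proper transform ($m_i=1$) still satisfies your inequality, yet it degrades the bound by $3$. The fact that actually makes the balance work is that in a (minimal) embedded resolution one blows up only singular points of the successive proper transforms, so every $m_i\ge 2$ and hence $m_i^2\ge 4$; this is exactly the hypothesis $m=\mult_PC\ge 2$ in the paper's Lemma~\ref{lem:blowinv}, which carries out the transfer one blow-up at a time. Your argument goes through once you replace the appeal to $K_{\widetilde X/X}\cdot\Gamma-\Gamma^2\ge 0$ (which is the lemma needed in Proposition~\ref{prop-elliptic}, a different bookkeeping problem) by the $m\ge 2$ count, and once you insist that the resolution never blows up smooth points of the proper transform.

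Your parenthetical alternative is in fact viable and resolution-free: putting $\alpha=1$ in \eqref{eq:M-orbifold} gives $C^2\ge K_XC-6g+6+2K_X^2-6c_2$, and combining this with adjunction in the form $K_XC+C^2=2p_a(C)-2\ge 2g-2$ yields \eqref{eq:weakbound} directly; but as written you did not carry out this computation, and the route you did commit to needs the correction above.
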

   The proof is a combination of Corollary \ref{cor:LMY} and the following simple
   lemma on the behavior of \eqnref{eq:weakbound} under blow ups.
\begin{lemma}\label{lem:blowinv}
   Let $X$ be a smooth projective surface, $C\subset X$ a reduced, irreducible
   curve of geometric genus $g(C)$, $P\in C$ a point with $m:=\mult_PC\geq 2$.
   Let $\sigma:\widetilde{X}\to X$ be the blow up of $X$ at $P$ with the
   exceptional divisor $E$. Let $\widetilde{C}=\sigma^*(C)-mE$ be the proper
   transform of $C$. Then the inequality
   $$\widetilde{C}^2\geq c_1^2(\widetilde{X})-3c_2(\widetilde{X})+2-2g(\widetilde{C})$$
   implies
   $$
      C^2\geq c_1^2(X)-3c_2(X)+2-2g(C).
   $$
\end{lemma}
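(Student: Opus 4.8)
The plan is a direct computation comparing the two sides of the inequality across the single blow-up $\sigma\colon\widetilde X\to X$ at $P$. First I would record the standard transformation formulas. From $\sigma^*C\cdot E=0$ and $E^2=-1$ one gets $\widetilde C^2=(\sigma^*C-mE)^2=C^2-m^2$. From $K_{\widetilde X}=\sigma^*K_X+E$ one gets $c_1^2(\widetilde X)=K_{\widetilde X}^2=K_X^2-1=c_1^2(X)-1$. Blowing up a point on a surface raises the topological Euler number by one, so $c_2(\widetilde X)=e(\widetilde X)=e(X)+1=c_2(X)+1$. Finally, $\sigma$ restricts to an isomorphism of normalizations of $\widetilde C$ and $C$ (it is an isomorphism away from $E$, and geometric genus is a birational invariant), so $g(\widetilde C)=g(C)$.

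Next I would substitute these identities into the hypothesis. The right-hand side of the assumed inequality becomes $c_1^2(\widetilde X)-3c_2(\widetilde X)+2-2g(\widetilde C)=\bigl(c_1^2(X)-1\bigr)-3\bigl(c_2(X)+1\bigr)+2-2g(C)=c_1^2(X)-3c_2(X)+2-2g(C)-4$, while the left-hand side is $\widetilde C^2=C^2-m^2$. Hence the assumed inequality reads $C^2-m^2\ge c_1^2(X)-3c_2(X)+2-2g(C)-4$, that is, $C^2\ge c_1^2(X)-3c_2(X)+2-2g(C)+(m^2-4)$.

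Now I would invoke the hypothesis $m\ge 2$, which gives $m^2-4\ge 0$, and the desired conclusion $C^2\ge c_1^2(X)-3c_2(X)+2-2g(C)$ follows at once. The only step that is not pure bookkeeping with the intersection form on $\widetilde X$ — and the nearest thing to an obstacle — is the invariance of the geometric genus under the blow-up, which is why I single it out above. I would also remark that the bound in fact improves strictly when $m\ge 3$; this surplus is harmless and is what makes the inductive resolution argument in the proof of Theorem~\ref{thm:WBNC} go through one infinitely near point at a time.
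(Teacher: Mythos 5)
Your proof is correct and is essentially identical to the paper's (which simply cites the same four facts $C^2=\widetilde C^2+m^2$, $c_1^2(X)=c_1^2(\widetilde X)+1$, $c_2(X)=c_2(\widetilde X)-1$, $g(C)=g(\widetilde C)$ and calls the rest a direct computation); you have merely written out the arithmetic and the use of $m\ge 2$ explicitly. No issues.
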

\begin{proof}
   This follows by direct computation using the facts that
   $C^2=\widetilde{C}^2+m^2$, $c_1^2(X)=c_1^2(\widetilde{X})+1$, $c_2(X)=c_2(\widetilde{X})-1$ and $g(C)=g(\widetilde{C})$.
\end{proof}

\begin{proof}[Proof of Theorem \ref{thm:WBNC}]
    Taking an embedded resolution $f:\widetilde X\to X$
   of $C$ and applying Lemma \ref{lem:blowinv} to every step, we reduce to proving the assertion for $C$ smooth.

The latter case easily follows from Corollary \ref{cor:LMY}.
   Indeed, our assumption $\kappa(X)\geq 0$ implies that $K_{\widetilde{X}}+\widetilde{C}$
   is $\Q$--effective. Hence we have
   \be
      c_1^2(X)+2C\cdot(K_X+C)-C^2&=&c_1^2(\olc)\\
      &\leq& 3c_2(\olc)=3c_2(X)-6+6g(C).
   \ee
   Rearranging terms and using the adjunction formula, we arrive at \eqnref{eq:weakbound}.
\end{proof}
   A closer analysis of Corollary \ref{cor:LMY} allows one to ease the
   assumption of $X$ being of non-negative Kodaira dimension by the
   assumption of $X$ being of non-negative logarithmic Kodaira dimension,
   see \cite[Corollary 1.2]{Miy84}.
\begin{aside}[Strong Logarithmic Miyaoka-Yau inequality]\label{asi:SLMY}
   Let $X$ be a smooth projective surface and $C$ a smooth curve
   on $X$ such that the adjoint line bundle $K_X+C$ is $\Q$--effective,
   i.e., there is an integer $m>0$ such that $h^0(m(K_X+C))>0$. Then
   $$
      c_1^2(\olc)\leq 3c_2(\olc),
   $$
   equivalently $(K_X+C)^2\leq 3\left(c_2(X)-2+2g(C)\right)$.
\end{aside}
   So one gets the same bound \eqnref{eq:weakbound} as in Theorem \ref{thm:WBNC},
   for all curves $C$ such that $K_X+C$ is $\Q$--effective.

\section{Negativity of Shimura curves on Hilbert modular surfaces}\label{sect-Shimura}
\subsection{Smoothness of Shimura curves and Hecke translates}\label{Section : Criteria}
   We begin by giving  a criterion for a Shimura curve to be smooth
  on a quaternionic Hilbert modular surface
   and indicating why its Hecke translates might fail to remain smooth.
   In the next subsection we will show that the worst scenario actually happens.

   We recall first how (quaternionic) Shimura surfaces
   are defined. For a complete reference on their construction see \cite{Deligne},
   and for particularly interesting examples see \cite{Granath} and \cite{Shavel}.
   Let $A$ be a ramified quaternion algebra over a totally real number field
$k$. Let $\mathcal{O}_{A}$ be a maximal order of $A$ and let
\[
\Gamma(1)=\{\gamma\in\mathcal{O}_{A}:\;\; \nr(\gamma)=1\},
\]
where $\nr$ denotes  the reduced norm. Suppose that $A$ splits over exactly
two places in $\mathbb{R}$, i.e., there exist two embeddings $\sigma_{i}:k\to\mathbb{R}$
such that the tensor products $A\otimes_{k^{\sigma_{i}}}\mathbb{R}$
over these places are isomorphic to $M_{2}(\mathbb{R})$, while for all the
other embeddings the tensor product is isomorphic to the Hamiltonian
quaternions.

We fix such isomorphisms, giving rise to  a representation
\[
\begin{array}{cccc}
\rho: & A & \to & M_{2}(\mathbb{R})\times M_{2}(\mathbb{R})\\
 & \gamma & \to & (\gamma_{1},\gamma_{2})\end{array}\ .
\]
The morphism $\rho$ maps $A^{\times}$ into $GL_{2}(\mathbb{R})^{2}$.
Let $A^{+}$ be the sub-group of elements $\gamma$ of $A$ such that
$\det(\gamma_{i})>0$ for $i=1,2$. The group $A^{+}$ acts on $\mathbb{H}\times\mathbb{H}$
by
\[
\gamma\cdot(z_{1},z_{2})=(\gamma_{1}\cdot z_{1},\gamma_{2}\cdot z_{2}) \,,
\]
where, for $\gamma_{i}=\left(\begin{array}{cc}
a & b\\
c & d\end{array}\right)$, we have
\[
\gamma_{i}\cdot z=\frac{az+b}{cz+d}\,.
\]

Let us denote by $\Gamma$ a sub-group of $A^{+}$  commensurable to $\Gamma(1)$, i.e., $\Gamma(1)\cap\Gamma$ has finite index in
both $\Gamma(1)$ and $\Gamma$. With these hypotheses, the quotient
$X=\mathbb{H}\times\mathbb{H}/\Gamma$ is a compact algebraic surface
(see \cite{Deligne}), called a  Shimura surface (or a compact Hilbert modular surface).

Let us suppose in addition that $\Gamma$ is torsion free or, equivalently, that  $X$ is smooth. The surface $X$ is then minimal of general
type with  $c_{1}^{2}=2c_{2},\, q=0$. We denote by
 $\pi:\mathbb{H}\times\mathbb{H}\to X$ the quotient map.

A Shimura curve is, in particular, a totally geodesic curve in $X$.
Let $C'_{1}$ be such a Shimura curve on $X$ and let
\[
\mathbb{H}_{1}\subset\pi^{-1}C_{1}'\subset\mathbb{H}\times\mathbb{H}
\]
 be a subspace isomorphic to $\mathbb{H}$ so that
\[
\Lambda_{1}=\{\gamma\in\Gamma:\;\; \gamma\mathbb{H}_{1}=\mathbb{H}_{1}\}
\]
is a lattice in $\Aut(\mathbb{H}_{1})$. Then $C_{1}=\mathbb{H}_{1}/\Lambda_{1}$
is a smooth compact curve whose image under  the generically one-to-one
map $C_{1}\rightarrow X$ we call $C_{1}'$.
\begin{proposition}
\label{proposition Smoothness criteria}The Shimura curve $C'_{1}$
is smooth if and only if $\mathbb{H}_{1}\cap\gamma\mathbb{H}_{1}=\emptyset$ for all $\gamma\in\Gamma\setminus\Lambda_{1}$.
\end{proposition}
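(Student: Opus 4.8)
The plan is to identify the normalization of $C_1'$ with the uniformizing curve $C_1$ and then to read off smoothness of $C_1'$ from the injectivity of this normalization. First I would note that the morphism $C_1\to X$ is finite onto its image $C_1'$ (the source is compact and the map is nonconstant) and generically one-to-one by construction, hence birational onto $C_1'$; since $C_1$ is smooth, the induced map $\nu\colon C_1\to C_1'$ is the normalization of $C_1'$. The essential additional point is that $\nu$ is moreover a \emph{holomorphic immersion}: because $\Gamma$ is torsion free it acts freely on $\mathbb{H}\times\mathbb{H}$, so $\pi$ is a covering map, while $\mathbb{H}_1$ sits inside $\mathbb{H}\times\mathbb{H}$ as a holomorphically embedded copy of $\mathbb{H}$; factoring $\mathbb{H}_1\hookrightarrow\mathbb{H}\times\mathbb{H}\xrightarrow{\pi}X$ through the covering $\mathbb{H}_1\to C_1$ (note $\Lambda_1\subset\Gamma$ also acts freely on $\mathbb{H}_1$) gives the claim. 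With this in hand, $C_1'$ is smooth if and only if $\nu$ is injective: if $C_1'$ is smooth then $\nu$, being a normalization, is bijective over the smooth locus, hence injective; conversely, an injective immersion of the compact Riemann surface $C_1$ into $X$ is a closed embedding, so its image $C_1'$ is a smooth submanifold.

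Next I would translate injectivity of $\nu$ into the stated group-theoretic condition. Representing two points of $C_1$ by $p,q\in\mathbb{H}_1$, the covering $\mathbb{H}_1\to C_1$ identifies them precisely when $q\in\Lambda_1 p$, whereas the composite $\mathbb{H}_1\to X$ identifies them precisely when $q=\gamma p$ for some $\gamma\in\Gamma$; and since the $\Gamma$-action is free, for such a $\gamma$ one has $q\in\Lambda_1 p$ if and only if $\gamma\in\Lambda_1$. Hence $\nu$ fails to be injective exactly when there exist $p\in\mathbb{H}_1$ and $\gamma\in\Gamma\setminus\Lambda_1$ with $\gamma p\in\mathbb{H}_1$, that is, exactly when $\gamma\mathbb{H}_1\cap\mathbb{H}_1\neq\emptyset$ for some $\gamma\in\Gamma\setminus\Lambda_1$. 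Combining this equivalence with the previous paragraph yields the proposition.

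The main obstacle is the first step: one must make sure that bijectivity of $\nu$ is genuinely equivalent to smoothness of $C_1'$, since a priori a bijective normalization could still have cuspidal image. This is precisely what the immersion property of $\nu$ rules out, which is the reason one must use that $\mathbb{H}_1$ is \emph{holomorphically} embedded in $\mathbb{H}\times\mathbb{H}$ and that $\pi$ is \'etale (equivalently, that $\Gamma$ is torsion free), rather than merely the set-theoretic description of $\Gamma$-orbits; once these are in place the remaining steps are routine bookkeeping with the free action.
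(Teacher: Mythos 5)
Your argument is correct and follows essentially the same route as the paper: both rest on the observation that $C_1\to X$ is an immersion (because $\mathbb{H}_1\hookrightarrow\mathbb{H}\times\mathbb{H}\to X$ is), so that singularities of $C_1'$ occur exactly when two distinct $\Lambda_1$-orbits in $\mathbb{H}_1$ lie in one $\Gamma$-orbit, which is then translated into the condition $\mathbb{H}_1\cap\gamma\mathbb{H}_1\neq\emptyset$ for some $\gamma\in\Gamma\setminus\Lambda_1$. Your write-up merely makes explicit some points the paper leaves implicit (that $C_1\to C_1'$ is the normalization, and that an injective immersion of a compact curve is an embedding, ruling out cusps).
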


\begin{proof}
The map $C_{1}\rightarrow X$ is an immersion because the map $\mathbb{H}_{1}\to X$
is so. Thus singularities on $C'_{1}$ can occur if and only if there
are two distinct points $\Lambda_{1}t,\,\Lambda_{1}u$ on $C_{1}$
(with $u,t\in\mathbb{H}_{1}$) mapped onto the same point by the generically
one-to-one map $C_{1}\to C'_{1}$. For such points we have
$\Gamma t=\Gamma u$, i.e., there exist $\gamma\in\Gamma$ such that $t=\gamma u$. As $\Lambda_{1}t\not=\Lambda_{1}u$,
we have $\gamma\in\Gamma-\Lambda_{1}$
and the intersection of the upper-halfplanes $\mathbb{H}_{1}$ and $\gamma\mathbb{H}_{1}$
is not empty. Conversely, if the intersection of the upper-halfplanes
$\mathbb{H}_{1}$ and $\gamma\mathbb{H}_{1}$ is not empty, there
are two distinct points on $C_{1}$ that have the same image on $C'_{1}$,
and thus there is a singularity on $C_{1}'$.
\end{proof}

For $h\in A^{+}$, set $\mathbb{H}_{h}:=h(\mathbb{H}_{1})$ and
let
\[
\Lambda_{h}=\{\lambda\in\Gamma:\;\;\lambda\mathbb{H}_{h}=\mathbb{H}_{h}\}\ .
\]
The group $\Lambda_{h}$ is equal to the lattice $h\Lambda_{1}h^{-1}\cap\Gamma$.
Let $C_{h}=\mathbb{H}_{h}/\Lambda_{h}$ and let $C'_{h}$ be the image of $C_{h}$ in $X$ under the natural map. Again, $C'_{h}$
is a Shimura curve.

\begin{proposition}
\label{prop. Shimura-curves C_1 et C_h}Suppose that the  curve
$C'_{1}$ is smooth. Then the Shimura curve $C'_{h}$ is smooth if and
only if $\mathbb{H}_{1}\cap\gamma\mathbb{H}_{1}=\emptyset$ for all $\gamma\in h^{-1}\Gamma h\setminus\Gamma$.
\end{proposition}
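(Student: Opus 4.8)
The plan is to reduce the smoothness question for $C'_h$ to an application of Proposition~\ref{proposition Smoothness criteria}, but applied on the level of the covering $\mathbb H$-subspaces after conjugating by $h$. First I would recall the setup: $\mathbb H_h = h(\mathbb H_1)$ and $\Lambda_h = h\Lambda_1 h^{-1}\cap\Gamma$ is the stabiliser of $\mathbb H_h$ in $\Gamma$. By the verbatim argument of Proposition~\ref{proposition Smoothness criteria} (which only used that $\mathbb H_1\to X$ is an immersion and that $C_1\to C'_1$ is generically one-to-one, both of which hold equally for $\mathbb H_h$ and $C_h\to C'_h$), the curve $C'_h$ is smooth if and only if
\[
\mathbb H_h\cap\gamma\mathbb H_h=\emptyset\quad\text{for all }\gamma\in\Gamma\setminus\Lambda_h.
\]

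The key step is then a purely set-theoretic translation of this condition. Writing $\mathbb H_h=h\mathbb H_1$, the intersection $\mathbb H_h\cap\gamma\mathbb H_h$ is nonempty exactly when $h\mathbb H_1\cap\gamma h\mathbb H_1\neq\emptyset$, i.e. (applying $h^{-1}$) when $\mathbb H_1\cap h^{-1}\gamma h\,\mathbb H_1\neq\emptyset$. So as $\gamma$ ranges over $\Gamma$, the element $\gamma':=h^{-1}\gamma h$ ranges over $h^{-1}\Gamma h$, and $C'_h$ is singular iff there is some $\gamma'\in h^{-1}\Gamma h$ with $\mathbb H_1\cap\gamma'\mathbb H_1\neq\emptyset$ and $\gamma\notin\Lambda_h$. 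The translation of the exceptional set is where I must be careful: $\gamma\in\Lambda_h$ means $\gamma\in\Gamma$ and $\gamma h\mathbb H_1=h\mathbb H_1$, equivalently $\gamma'=h^{-1}\gamma h$ stabilises $\mathbb H_1$ while also lying in $h^{-1}\Gamma h$. Now here I invoke the hypothesis that $C'_1$ is smooth: by Proposition~\ref{proposition Smoothness criteria}, any $\gamma'\in\Gamma$ that stabilises $\mathbb H_1$ already lies in $\Lambda_1\subset\Gamma$, and more to the point, the only elements of $\Gamma$ whose $\mathbb H_1$-translate meets $\mathbb H_1$ are those in $\Lambda_1$. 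Combining: for $\gamma'\in h^{-1}\Gamma h\cap\Gamma$, the condition $\mathbb H_1\cap\gamma'\mathbb H_1\neq\emptyset$ forces $\gamma'\in\Lambda_1$, hence $\gamma'$ stabilises $\mathbb H_1$, hence $\gamma\in\Lambda_h$; so such $\gamma'$ never produce a singularity. Therefore the only elements $\gamma'\in h^{-1}\Gamma h$ that can possibly give a singular point are those with $\gamma'\notin\Gamma$, and for these $C'_h$ is singular iff $\mathbb H_1\cap\gamma'\mathbb H_1\neq\emptyset$. Reading this contrapositively gives exactly the stated criterion: $C'_h$ is smooth iff $\mathbb H_1\cap\gamma\mathbb H_1=\emptyset$ for all $\gamma\in h^{-1}\Gamma h\setminus\Gamma$.

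The main obstacle is bookkeeping with the exceptional set: one has to make sure that no element of $h^{-1}\Gamma h\cap\Gamma$ slips through and creates a singularity, and this is precisely where the smoothness of $C'_1$ enters in a non-trivial way — without it, an element $\gamma'\in\Lambda_1$ that does \emph{not} lie in $\Lambda_h$ could a priori cause trouble, but smoothness of $C'_1$ guarantees $\Lambda_1$ is exactly the $\mathbb H_1$-incidence stabiliser, so $\Lambda_1\subset h^{-1}\Lambda_h h$ in the relevant sense. I would also take a moment to double-check the degenerate possibilities (e.g. $\gamma'$ fixing $\mathbb H_1$ pointwise versus merely setwise, and the case $\mathbb H_h=\mathbb H_1$), but these are handled uniformly by the incidence formulation and do not require separate treatment.
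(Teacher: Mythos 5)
Your proposal is correct and follows essentially the same route as the paper: apply Proposition~\ref{proposition Smoothness criteria} to $C'_h$, conjugate the incidence condition by $h$, and use the smoothness of $C'_1$ together with $\Lambda_h = h\Lambda_1 h^{-1}\cap\Gamma$ to show that elements of $h^{-1}\Gamma h\cap\Gamma$ cannot produce singularities. Your bookkeeping of the exceptional set is, if anything, spelled out more carefully than in the paper's own (rather terse) case analysis.
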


\begin{proof}
We apply Proposition \ref{proposition Smoothness criteria} to $C'_{h}$.
The curve $C'_{h}$ is smooth if and only if $\mathbb{H}_{h}\cap\gamma\mathbb{H}_{h}=\emptyset$ for all $\gamma\in\Gamma\setminus\Lambda_{h}$.
 Suppose that the curve $C'_{h}$ is singular. There exist then $z_{1},z_{2}\in\mathbb{H}_{1}$
(whence $hz_{1},hz_{2}\in\mathbb{H}_{h}$) and $\gamma\in\Gamma\setminus\Lambda_{h}$
such that $hz_{1}=\gamma(hz_{2})$. Then $z_{1}=h^{-1}\gamma hz_{2}$.
As $C'_{1}$ is smooth, we have two possibilities: either $h^{-1}\gamma h\in\Lambda_{1}$
and $h^{-1}\gamma h\not\in\Gamma$ or for
$\gamma'=h^{-1}\gamma h\in h^{-1}\Gamma h\setminus\Gamma$ we have
$\mathbb{H}_{1}\cap\gamma\mathbb{H}_{1}\not=\emptyset$.
The first possibility is impossible
because $\Lambda_{h}=h\Lambda_{1}h^{-1}\cap\Gamma$ and we assumed that
$\gamma\in\Gamma\setminus\Lambda_{h}$. Therefore the second possibility holds.
For the converse statement, we remark that all the above arguments are in
fact equivalences.
\end{proof}

As we will remark below, each element $h$ of $A^{+}$ defines a Hecke correspondence
$T_{h}$, and the curve $C'_{h}$ is an irreducible component of the
image of $C'_{1}$ by $T_{h}$. We have $T_{h}=T_{h'}$ if and only
if $\Gamma h=\Gamma h'$.
When varying $\Gamma h$ in $\Gamma\setminus A^{+}$, we see by the
above Proposition \ref{prop. Shimura-curves C_1 et C_h} that in order to keep $C'_{h}$
smooth, the half-plane $\mathbb{H}_{1}$ must avoid more and more
half-planes $\gamma\mathbb{H}_{1}$. Our next result (Proposition \ref{pro:We-have-:For}) shows that this is only possible
in  finitely many cases.

Let us now explain how Hecke correspondences come into the game. For
$h\in A^{+}$, let
\[
\Gamma_{h}=\Gamma\cap h^{-1}\Gamma h,
\]
which  is a subgroup of finite index $m$ in $\Gamma$. Let $t_{1}=1,t_{2},\dots,t_{m}$
be a full set of coset  representatives of $G$  with respect to $\Gamma_{h}$. Denote by  $X_{h}$
be the Shimura surface $X_{h}=\mathbb{H}\times\mathbb{H}/\Gamma_{h}$.

There are two \' etale maps  of degree $m$,
 \[
\xymatrix{
X_{h} \ar[r]^{\pi_{1}} \ar[d]_{\pi_{2}} &  X \\
X & }
 \]
where $\pi_{1}(\Gamma_{h}.z)=\Gamma.z$ and $\pi_{2}(\Gamma_{h}.z)=\Gamma h.z$.
We need to check that $\pi_{2}$ is well defined. Let $\tau:=h^{-1}\gamma h\in\Gamma_{h}$
with $\gamma\in\Gamma$ and $z':=\tau z$. Then \[
\Gamma h.z'=\Gamma h\tau.z=\Gamma hh^{-1}\gamma h.z=\Gamma\gamma h.z=\Gamma h.z\,,\]
and therefore the map $\pi_{2}$ does not depend on the choice of a representative
in $\Gamma_{h}.z$.  The Hecke operator $T_{h}$ is defined by $T_{h}=\pi_{2*}\pi_{1}^{*}$.
We have $\pi_{1}^{-1}\Gamma z=\Gamma_{h}t_{1}.z+\dots+\Gamma_{h}t_{m}.z$
and \[
T_{h}(\Gamma.z)=\Gamma ht_{1}.z+\dots+\Gamma ht_{m}.z\,.\]
It follows that $T_{h}C'_{1}=C'_{h}+Y_{2}+\dots+Y_{t}$ for some
irreducible curves $Y_{2},\dots,Y_{t}$.

\begin{remark}
Let  $X$ be a smooth Picard surface, i.e.,  $X= \mathbb{B}_2 /\Gamma $ is a quotient of the unit complex $2$-dimensional ball
$\mathbb{B}_2$ by a co-compact torsion free group $\Gamma \subset PU(2,1)$. It is possible to obtain the same  results (smoothness criteria, smoothness
of the Hecke translates) for a Shimura curve $C=\mathbb{B}_1/ \Lambda $ on $X$.  Again, the main idea  is that in order for an irreducible
component of the translate $T_h C$ of a Shimura curve to be smooth, the ball $h \mathbb{B}_1$  must avoid more and more balls when $h$ varies.
\end{remark}


\subsection{Finiteness of smooth Shimura curves.}

Let $X$ be a compact Hilbert modular surface. As we will see, the self-intersection of a smooth Shimura curve $C$ on $X$ is very negative, in particular
 $C^{2}=-(2g(C)-2)<0$. On the other hand,
the set of Shimura curves on $X$ is preserved by Hecke correspondences.
It is therefore very natural to hope to obtain a counterexample to
the bounded negativity conjecture by taking the images of a Shimura
curve by Hecke correspondences. We will see however that there is only a finite number of Shimura curves (smooth or not) with $C^2 <0.$

Let $C$ be a curve on $X$ of geometric genus $g$. The difference
\begin{equation}\label{eq:delta}
\delta=\frac{1}{2}(K_{X}\cdot C+C^{2}-2g+2)
\end{equation}
   where $K_{X}$ is the canonical divisor of $X$, is a positive integer.
   If the curve is nodal, then this equals  the number of nodes on $C$.
   We recall the following important Theorem from \cite{Barthel}.
\begin{theorem}[Hirzebruch-H\"ofer Proportionality Theorem]\label{lem:By-[Autissier,-Gasbarri,}
   For a Shimura curve $C$ on a quaternionic Hilbert modular surface $X$ we have
$$
K_{X}C=4(g-1)\;\;\mbox{ and }\;\; K_{X}C+2C^{2}=4\text{\ensuremath{\delta}}\,.$$
\end{theorem}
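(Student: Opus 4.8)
The plan is to exploit the product structure of $\mathbb{H}\times\mathbb{H}$, which forces a holomorphic splitting of $T_{X}$. First I would record that $\Gamma\subset A^{+}$ acts on $\mathbb{H}\times\mathbb{H}$ factor by factor, $\gamma\cdot(z_{1},z_{2})=(\gamma_{1}z_{1},\gamma_{2}z_{2})$, so the two tautological line subbundles $\mathrm{pr}_{i}^{*}T_{\mathbb{H}}\subset T_{\mathbb{H}\times\mathbb{H}}$, $i=1,2$, are $\Gamma$-invariant and, $\Gamma$ being torsion free, descend along the \'etale covering $\pi$ to holomorphic line bundles $\mathcal{F}_{1},\mathcal{F}_{2}$ on $X$ with
$$T_{X}=\mathcal{F}_{1}\oplus\mathcal{F}_{2},\qquad \Omega_{X}^{1}=\mathcal{F}_{1}^{\vee}\oplus\mathcal{F}_{2}^{\vee},\qquad \mathcal{O}_{X}(K_{X})=\mathcal{F}_{1}^{\vee}\otimes\mathcal{F}_{2}^{\vee}.$$
Writing $\mathcal{O}_{X}(K_{i})=\mathcal{F}_{i}^{\vee}$, this says $K_{X}=K_{1}+K_{2}$, where $\mathcal{F}_{1},\mathcal{F}_{2}$ are the tangent bundles of the two natural foliations of $X$; what follows is Hirzebruch--H\"ofer proportionality carried out in dimension one.

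Next I would pass to the normalization $\nu\colon C_{1}=\mathbb{H}_{1}/\Lambda_{1}\to X$ of the Shimura curve $C=C'_{1}$ and compute $K_{X}\cdot C=\deg_{C_{1}}\nu^{*}\mathcal{O}_{X}(K_{X})=\deg\nu^{*}\mathcal{F}_{1}^{\vee}+\deg\nu^{*}\mathcal{F}_{2}^{\vee}$ by the projection formula for $\nu$. The crucial input is that each composition $p_{i}\colon\mathbb{H}_{1}\hookrightarrow\mathbb{H}\times\mathbb{H}\xrightarrow{\mathrm{pr}_{i}}\mathbb{H}$ is a biholomorphism: $\mathbb{H}_{1}$ is a complex totally geodesic curve in $\mathbb{H}\times\mathbb{H}$ for the product metric, and such a curve is either a factor slice $\mathbb{H}\times\{pt\}$, $\{pt\}\times\mathbb{H}$ or the graph of a M\"obius transformation $\mathbb{H}\to\mathbb{H}$; the factor slices are excluded because the foliations on a compact Hilbert modular surface have no compact leaves ($\Gamma$ is an irreducible lattice), so $\mathbb{H}_{1}$ is a graph and both $p_{i}$ are biholomorphisms. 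Pulling back then gives $\nu^{*}\mathcal{F}_{i}\cong p_{i}^{*}T_{\mathbb{H}}\cong T_{C_{1}}$ ($\Lambda_{1}$-equivariantly), hence $\deg\nu^{*}\mathcal{F}_{i}^{\vee}=\deg\Omega^{1}_{C_{1}}=2g-2$ and
$$K_{X}\cdot C=(2g-2)+(2g-2)=4(g-1).$$

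The second identity is then purely formal. Adjunction on $X$ gives $K_{X}\cdot C+C^{2}=2p_{a}(C)-2$ and $p_{a}(C)-g=\delta$, so the definition \eqref{eq:delta} reads $2\delta=K_{X}\cdot C+C^{2}-2g+2$; doubling and substituting $4g-4=K_{X}\cdot C$ from the first part yields $4\delta=2K_{X}\cdot C+2C^{2}-(4g-4)=K_{X}\cdot C+2C^{2}$. In particular $C$ smooth gives $\delta=0$ and $C^{2}=-(2g-2)<0$, matching the claim made in the text before the theorem.

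The main obstacle is the geometric assertion that both projections of $\mathbb{H}_{1}$ are isomorphisms, i.e. that a Shimura curve on a \emph{Hilbert modular} surface is of twisted-diagonal type rather than contained in a single leaf of a foliation. The alternative to invoking the classification of totally geodesic surfaces in $\mathbb{H}\times\mathbb{H}$ is to argue directly: if $p_{1}\circ\iota$ were constant, then $C$ would lie in one leaf of $\mathcal{F}_{1}$, the stabilizer of that leaf in $\Gamma$ would act cocompactly on it, and this would force the image of $\Gamma$ under the first projection to be non-dense in $\mathrm{PSL}_{2}(\mathbb{R})$ --- contradicting the irreducibility of the lattice underlying a compact Hilbert modular surface. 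Everything else is bookkeeping with the projection formula for $\nu$ and with adjunction.
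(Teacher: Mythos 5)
Your argument is correct, but it is worth noting that the paper does not prove this statement at all: it simply recalls it from Barthel--Hirzebruch--H\"ofer \cite{Barthel}, so any actual proof is by definition a different route. What you supply is a clean self-contained derivation: the $\Gamma$-invariant splitting $T_{\mathbb H\times\mathbb H}=\mathrm{pr}_1^*T_{\mathbb H}\oplus \mathrm{pr}_2^*T_{\mathbb H}$ descends to $T_X=\mathcal F_1\oplus\mathcal F_2$, a Shimura curve lifts to a complex totally geodesic $\mathbb H_1$ which must be a twisted diagonal, and then $\nu^*\mathcal F_i\cong T_{C_1}$ gives $K_XC=\deg\nu^*(\mathcal F_1^\vee\otimes\mathcal F_2^\vee)=2(2g-2)$. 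You are also right that the second identity carries no independent content: with the paper's definition \eqref{eq:delta} of $\delta$, the relation $K_XC+2C^2=4\delta$ is algebraically equivalent to $K_XC=4(g-1)$, so only the first formula needs proof. Two points deserve slightly more care than your sketch gives them. First, the classification of complex totally geodesic curves in $\mathbb H\times\mathbb H$ as slices or graphs of isometries is true but is exactly where the proportionality constant comes from (a graph of $\lambda U$ with $\lambda\neq 1$ is not totally geodesic; this is the Lie-triple-system/curvature computation), so it should be cited or carried out. Second, for the exclusion of slices, the cleanest argument is that the stabilizer $\Lambda_1$ of a slice $\mathbb H\times\{pt\}$ injects under the second projection into the compact abelian group $\mathrm{Stab}(pt)\cong PSO(2)$, hence is abelian, whereas it would have to be a cocompact Fuchsian group acting on the slice; your appeal to non-density of the projection of $\Gamma$ gestures at the same irreducibility input but is stated loosely. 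With those two points firmed up, your proof is complete and has the advantage over the paper's bare citation of making transparent why the constant is $4$ rather than $3$ (the ball-quotient case), which is precisely the issue flagged in the paper's closing remark of Section~\ref{sect-Shimura}.
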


Although we will not use this fact, it is interesting to notice that the curve $C$ in 
Theorem \ref{lem:By-[Autissier,-Gasbarri,} is nodal.

   The main
result of this section is
\begin{proposition}
\label{pro:We-have-:For}For a Shimura curve $C$ on $X$
we have the  following inequalities
\[
g \leq 1+c_{2}+\sqrt{c_{2}^{2}+c_{2}\delta}\;\;\mbox{ and }\;\; C^{2}\geq-6c_{2}\ .\]
In particular, if $C$ is smooth, then $g\leq1+2c_{2}$.

Moreover, there is
only a finite number of Shimura curves with $C^{2}<0$, since for $\delta\geq3c_{2}$,
the curve $C$ satisfies $C^{2}\geq0$.
\end{proposition}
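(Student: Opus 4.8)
The plan is to combine the Hirzebruch--H\"ofer identities of Theorem~\ref{lem:By-[Autissier,-Gasbarri,} with Miyaoka's inequality \eqref{eq:M-orbifold non rational} of Theorem~\ref{pro:By-[Miyaoka,-The} and to solve the resulting quadratic inequality for $g-1$. First I would record the numerical input. Since $X$ is minimal of general type with $c_1^2 = 2c_2$ we have $K_X^2 = 2c_2$, hence $3c_2 - K_X^2 = c_2$; and since $\Lambda_1 \subset \Gamma$ is torsion free while $X$ is compact, $C = \mathbb H_1/\Lambda_1$ is a compact hyperbolic Riemann surface, so $g := g(C) \ge 2$. In particular $C \not\simeq \mathbb P^1$, and Theorem~\ref{lem:By-[Autissier,-Gasbarri,} gives $K_XC = 4(g-1)$ (so $K_XC > 3g-3$) and $C^2 = 2\delta - 2(g-1)$; consequently $K_XC - 3g + 3 = g-1$ and $C^2 + 3CK_X - 6g + 6 = 2\delta + 4(g-1)$.

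Substituting these into \eqref{eq:M-orbifold non rational} yields $2(g-1)^2 \le c_2\big(2\delta + 4(g-1)\big)$, i.e.\ $(g-1)^2 - 2c_2(g-1) - c_2\delta \le 0$, and solving for $g-1$ gives $g - 1 \le c_2 + \sqrt{c_2^2 + c_2\delta}$, the first asserted bound. (Equivalently one may minimize the left-hand side of \eqref{eq:M-orbifold} over $\alpha \in [0,1]$: the vertex lies at $\alpha = (g-1)/(\delta + 2(g-1)) \in (0,1)$ and produces the same inequality.) Feeding this bound into $C^2 = 2\delta - 2(g-1)$ gives $C^2 \ge 2\delta - 2c_2 - 2\sqrt{c_2^2 + c_2\delta}$, and $C^2 \ge -6c_2$ now follows by isolating the square root and squaring (the quantity $\delta + 2c_2$ is positive), which reduces it to $\delta^2 + 3c_2\delta + 3c_2^2 \ge 0$. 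When $C$ is smooth, adjunction forces $\delta = 0$, so the first bound specializes to $g \le 1 + 2c_2$.

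For the finiteness assertion I would first check the stated implication $\delta \ge 3c_2 \Rightarrow C^2 \ge 0$: since $C^2 \ge 0 \iff g - 1 \le \delta$, it suffices that $c_2 + \sqrt{c_2^2 + c_2\delta} \le \delta$, and because $\delta - c_2 \ge 2c_2 > 0$ one may square, reducing this to $3c_2\delta \le \delta^2$, i.e.\ to $\delta \ge 3c_2$. Hence any Shimura curve with $C^2 < 0$ has $\delta \le 3c_2 - 1$; the first bound then gives $g - 1 < 3c_2$, so $K_XC = 4(g-1)$ and $C^2 = 2\delta - 2(g-1)$ lie in bounded ranges. Using the Hodge index theorem together with $K_X^2 = 2c_2 > 0$ (decompose $C$ and a fixed ample class $A$ along $K_X$ and its orthogonal complement, on which the intersection form is negative definite, and apply Cauchy--Schwarz), one obtains that $A\cdot C$ is bounded as well. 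Finally, on any projective surface only finitely many \emph{irreducible} curves $C$ satisfy both $C^2 < 0$ and $A\cdot C \le B$: curves of bounded degree form a bounded family, and a curve of negative self-intersection cannot deform nontrivially, since two distinct members would be numerically equivalent irreducible curves with negative intersection product. This gives the finiteness.

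The intersection-theoretic core --- from \eqref{eq:M-orbifold non rational} to the quadratic bound on $g-1$ --- is routine once Theorem~\ref{lem:By-[Autissier,-Gasbarri,} is available, as are the manipulations for $C^2 \ge -6c_2$ and for $\delta \ge 3c_2 \Rightarrow C^2 \ge 0$. The one genuinely non-formal point, and the step I expect to be the main obstacle, is the last one: turning the boundedness of the numerical invariants into actual finiteness of the curves, which rests on the rigidity of negative curves rather than on Miyaoka's inequality. I would also state the Hodge-index boundedness carefully (it needs only $K_X^2 > 0$, not ampleness of $K_X$) and double-check the borderline genus $g = 2$, where several of the inequalities are nearly sharp.
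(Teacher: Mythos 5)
Your proposal is correct and follows essentially the same route as the paper: both combine the Hirzebruch--H\"ofer identities with Miyaoka's inequalities to obtain the quadratic bound $(g-1)^2-2c_2(g-1)-c_2\delta\le 0$, deduce $C^2\ge 2\delta-2c_2-2\sqrt{c_2^2+c_2\delta}$ and hence the stated bounds, and conclude finiteness from boundedness of the numerical invariants together with the rigidity of negative curves. Your variants --- deriving the $C^2$ bound from the genus bound rather than by minimizing $P(\alpha)$ from \eqref{eq:M-orbifold}, checking $g\ge 2$ so that \eqref{eq:M-orbifold non rational} applies, and making the ``finitely many families'' step explicit via Hodge index --- are sound refinements of the same argument.
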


\begin{proof}
   The idea is to show that for $\delta\geq3c_{2}$,
the Shimura curve $C$ satisfies $C^{2}\geq0$. Computing $g$ from \eqnref{eq:delta}
and inserting it into \eqnref{eq:M-orbifold} we obtain
\[
P(\alpha)=\alpha^{2}(3\delta-C^{2})+\alpha(CK_{X}+3C^{2}-6\delta)+3c_{2}-K_{X}^{2}\,\geq\, 0\]
for $0\leq\alpha\leq1$. Using the second equality in Theorem \ref{lem:By-[Autissier,-Gasbarri,}
and since $K_{X}^{2}=2c_{2}$ for compact Hilbert modular surfaces, we  get
\[
P(\alpha)=\alpha^{2}(3\delta-C^{2})+\alpha(C^{2}-2\delta)+c_{2} \,\geq\, 0.\]
If $C^{2}\geq2\delta$, then obviously $C^{2}\geq0$. If $C^{2}<2\delta$,
the minimum of $P(\alpha)$ is attained for \[
\alpha_{0}=\frac{2\delta-C^{2}}{2(3\delta-C^{2})}.\]
Note that $0<\alpha_{0}<1$. Evaluating the condition $P(\alpha_0)\geq 0$,
we obtain
\begin{equation}\label{eq:roots of P}
2c_{2}+2\sqrt{c_{2}^{2}+\delta c_{2}}\geq2\delta-C^{2}\geq2c_{2}-2\sqrt{c_{2}^{2}+\delta c_{2}}.
\end{equation}
For $C^{2}<2\delta$, we get the lower bound\[
C^{2}\geq2\delta-2c_{2}-2\sqrt{c_{2}^{2}+\delta c_{2}}\,.\]
Hence, if $\delta\geq3c_{2}$, we indeed have $C^{2}\geq0$.

Suppose now that $C^{2}<0$. Then $\delta<3c_{2}$, and therefore
$-2\sqrt{c_{2}^{2}+\delta c_{2}}>-4c_{2}.$
 We get from \eqnref{eq:roots of P} that $$C^{2}\geq2\delta-2c_{2}-2\sqrt{c_{2}^{2}+\delta c_{2}}\geq2(\delta-3c_{2})$$
and consequently $C^{2}\geq-6c_{2}$.

Miyaoka's formula \eqnref{eq:M-orbifold non rational} with $C^{2}+K_{X}C=2g-2+2\delta$ implies\[
(K_{X}C-3g+3)^{2}-c_{2}(K_{X}C+\delta-2g+2)\leq0\,.\]
As $K_{X}C=4g-4$, we get \[
(g-1)^{2}-2c_{2}(g-1)-c_{2}\delta\leq0\]
and therefore \[
g-1\leq c_{2}+\sqrt{c_{2}^{2}+c_{2}\delta}\,.\]
Now for $C^{2}<0$, we know that $\delta<3c_{2}$ and thus we have $g\leq3c_{2}+1$.
Since $K_X C=4g-4$, the intersection number $K_X C$ is bounded from above. 
An infinite number of Shimura curves with bounded geometric genus $g$ and bounded intersection with $K_X$  
must be in a finite number of families of curves, thus these Shimura curves must
deform and satisfy $C^{2}\geq0$, therefore the number of Shimura
curves with $C^{2}<0$ must be finite.
\end{proof}

\begin{corollary}\label{cor:finitely many smooth Shimuras}
   There are only finitely many smooth Shimura curves on a compact Hilbert modular surface.
\end{corollary}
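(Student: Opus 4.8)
The plan is to deduce Corollary \ref{cor:finitely many smooth Shimuras} directly from Proposition \ref{pro:We-have-:For}. The key observation is that a smooth Shimura curve $C$ on a compact Hilbert modular surface has strictly negative self-intersection. Indeed, by the Hirzebruch–Höfer Proportionality Theorem \ref{lem:By-[Autissier,-Gasbarri,}, $K_X C = 4(g-1)$ and $K_X C + 2C^2 = 4\delta$; when $C$ is smooth we have $\delta = 0$ (there are no nodes, and more generally $\delta$ is the "number of nodes"-type correction which vanishes for smooth curves — this is also recorded in the text preceding \eqnref{eq:delta} together with the adjunction formula $2g-2 = C^2 + K_X C$), so $2C^2 = -K_X C = -4(g-1)$, i.e. $C^2 = -(2g(C)-2)$. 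Since a smooth Shimura curve is totally geodesic and hence of general type with $g \ge 2$, this forces $C^2 = -(2g-2) < 0$.

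First I would make this reduction explicit: every smooth Shimura curve $C$ satisfies $C^2 < 0$, so the set of smooth Shimura curves is contained in the set of Shimura curves with $C^2 < 0$. Then I would invoke the last sentence of Proposition \ref{pro:We-have-:For}, which asserts precisely that there is only a finite number of Shimura curves (smooth or not) with $C^2 < 0$. Combining these two facts immediately yields that there are only finitely many smooth Shimura curves, completing the proof.

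The only point requiring a little care is the claim that smoothness of $C'$ forces $\delta = 0$ and $g \ge 2$. For $\delta$: the integer $\delta$ defined in \eqnref{eq:delta} measures the difference between the arithmetic genus of $C$ (computed from adjunction on the surface, i.e. $\frac{1}{2}(K_X\cdot C + C^2) + 1$) and the geometric genus $g$; for a smooth curve these coincide, so $\delta = 0$. For $g \ge 2$: a compact Hilbert modular surface has ample canonical class (it is minimal of general type), so $K_X C > 0$ for any curve, and by Proportionality $K_X C = 4(g-1) > 0$ gives $g \ge 2$ (in fact $g = 1$ would give $K_X C = 0$, impossible, and $g=0$ would give $K_X C < 0$). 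Hence $C^2 = -(2g-2) \le -2 < 0$.

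There is no real obstacle here; the corollary is a formal consequence of the proposition once one records that smooth Shimura curves are negative curves. If one wanted to avoid even mentioning $\delta$, one could argue more crudely: by Proposition \ref{pro:We-have-:For} a smooth Shimura curve satisfies $g \le 1 + 2c_2$ and $C^2 \ge -6c_2$, so smooth Shimura curves have bounded genus and bounded intersection with $K_X$ (namely $K_X C = 4(g-1) \le 8c_2$); as in the proof of the proposition, an infinite family of curves with bounded genus and bounded $K_X$-degree must move in algebraic families and the general member would then have $C^2 \ge 0$, contradicting $C^2 < 0$. Either way the conclusion is immediate, so I would simply cite Proposition \ref{pro:We-have-:For}.
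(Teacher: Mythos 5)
Your proposal is correct and follows exactly the paper's argument: the paper's proof likewise observes that a smooth Shimura curve has $C^2=-(2g-2)<0$ via the second equality of the Hirzebruch--H\"ofer Proportionality Theorem (with $\delta=0$ for smooth curves) and then cites the finiteness of negative Shimura curves from Proposition~\ref{pro:We-have-:For}. Your additional justifications of $\delta=0$ and $g\ge 2$ are sound and merely make explicit what the paper leaves implicit.
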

\proof
   This follows immediately from Proposition \ref{pro:We-have-:For}, as
   smooth Shimura curves have a negative self-intersection by the
   second equality in Theorem \ref{lem:By-[Autissier,-Gasbarri,}.
\endproof
\begin{remark}
   It is easy to see that the compactness of $X$ was not used in the course
   of the proof of Proposition \ref{pro:We-have-:For}. The same statement holds
   therefore for open Hilbert modular surfaces. An important ingredient of the proof
   was however the Proportionality property \ref{lem:By-[Autissier,-Gasbarri,}.
   In fact the Proportionality holds also for modular curves (i.e., those passing through
   the cusps of a Hilbert modular surface) \cite[Theorems 0.1 and 0.2 combined]{MVZ09},
   hence the statement of
   Proposition \ref{pro:We-have-:For} remains valid for such curves.

   Since the numerics are different for ball quotients, we do not know
   if there is a bound similar to that of Proposition \ref{pro:We-have-:For}
   for ball quotients.
\end{remark}


\section{Surfaces with infinitely many negative curves of fixed self-intersection}\label{sect-inf}

   The well-known example of $\mathbb P^2$ blown-up at nine points
   shows that there are surfaces containing infinitely many
   $(-1)$-curves. Along similar lines, we point out here that one can exhibit  surfaces with
   infinitely many negative curves of any given (fixed) negative
   self-intersection.

\begin{theorem}
   For every integer $m>0$ there are smooth projective complex surfaces
   containing infinitely many smooth irreducible curves of
   self-intersection $-m$.
\end{theorem}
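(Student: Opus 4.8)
The plan is to realize such a surface as a relatively minimal elliptic fibration over $\mathbb{P}^1$ obtained by base change from a rational elliptic surface, the infinitely many curves being the sections of the fibration. Recall that if $X_0$ is the blow-up of $\mathbb{P}^2$ at the nine base points of a general pencil of plane cubics, then $X_0$ carries a relatively minimal elliptic fibration $\phi_0\colon X_0\to\mathbb{P}^1$ with a section, and $e(X_0)=12$, $K_{X_0}^2=0$, $\chi(\calo_{X_0})=1$; moreover the Mordell--Weil group of $\phi_0$ is free of rank $8$, so $\phi_0$ has infinitely many sections, each a smooth rational curve of self-intersection $-1$. For $m=1$ this already gives the statement, so from now on assume $m\ge 2$.

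First I would pick two points $b_1,b_2\in\mathbb{P}^1$ lying under \emph{smooth} fibres of $\phi_0$ (possible, as $\phi_0$ has only finitely many singular fibres) and let $g\colon\mathbb{P}^1\to\mathbb{P}^1$ be the degree-$m$ morphism totally ramified over $b_1$ and $b_2$ and unramified elsewhere; in suitable coordinates $g=[z:w]\mapsto[z^m:w^m]$. Set $X:=X_0\times_{\mathbb{P}^1}\mathbb{P}^1$, with induced elliptic fibration $\phi\colon X\to\mathbb{P}^1$ and projection $p\colon X\to X_0$. The point needing care is that $X$ is a smooth projective surface and $\phi$ is relatively minimal with a section: near a smooth fibre $\phi_0$ is a submersion, so its base change along $g$ stays smooth, even above $b_1,b_2$ where $\phi_0$ is still smooth; over each point below a singular fibre the map $g$ is étale, so $X$ is there a disjoint union of $m$ copies of a neighbourhood in $X_0$, hence smooth, and $\phi$ has exactly the same singular-fibre types as $\phi_0$ (each occurring $m$ times), in particular no multiple fibre and no $(-1)$-curve inside a fibre; finally the pullback along $p$ of any section of $\phi_0$ is a section of $\phi$.

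Next I would read off the invariants. Since $g$ is unramified over the singular fibres and a smooth elliptic fibre has Euler number $0$, additivity of the Euler number for elliptic fibrations gives $e(X)=m\,e(X_0)=12m$; as $\phi$ is relatively minimal, $K_X^2=0$, so Noether's formula yields $\chi(\calo_X)=\tfrac1{12}(K_X^2+e(X))=m$. By the canonical bundle formula for a relatively minimal elliptic surface over $\mathbb{P}^1$ without multiple fibres, $K_X\numequiv(\chi(\calo_X)-2)F=(m-2)F$ with $F$ a fibre. Hence for any section $S=\sigma(\mathbb{P}^1)$ of $\phi$ one has $K_X\cdot S=m-2$ and, by adjunction, $S^2=2g(S)-2-K_X\cdot S=-m$, while $S\cong\mathbb{P}^1$ is smooth and irreducible.

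Finally, pullback of sections along $p$ is injective on Mordell--Weil groups: two sections of $\phi_0$ with the same pullback become equal after precomposition with the surjective map $g$, hence coincide. Therefore $\phi$ has infinitely many sections, and distinct sections have distinct images, giving infinitely many smooth irreducible curves of self-intersection $-m$ on the smooth projective complex surface $X$. I expect the only genuine obstacle to be the verification in the second paragraph that the base-changed surface is smooth and relatively minimal with the asserted numerical invariants; the rest is a formal computation with the canonical bundle formula and adjunction.
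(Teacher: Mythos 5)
Your proof is correct, but it takes a genuinely different route from the paper's own proof of this theorem. The paper works on the abelian surface $A=E\times E$ for $E$ without complex multiplication: among the infinitely many elliptic curves $E_{c,d}\subset A$ (each with $E_{c,d}^2=0$) it extracts a subsequence all of whose members share the same set of $t$-division points for a fixed $t$ with $t^2\ge m$, and then blows up $m$ of these common points, so the proper transforms are smooth genus-one curves of self-intersection $-m$ on a surface of Picard number $m+3$. Your argument is instead exactly the mechanism the paper deploys for its subsequent Theorem~\ref{ThmB} (infinitely many $(-m)$-curves of prescribed genus $g$, for $m\ge2$), specialized to $g=0$: a degree-$m$ base change of a rational elliptic surface with infinitely many sections multiplies $\chi(\mathcal O)$ by $m$ and turns the sections into smooth rational $(-m)$-curves, with $m=1$ handled by the nine-point blow-up itself. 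The steps you single out as needing care all check out: smoothness of the fibre product holds because $g$ is \'etale below the singular fibres and $\phi_0$ is smooth over the two ramification points; the resulting fibration has a section, hence no multiple fibres, and is relatively minimal; $e(X)=12m$, so $\chi(\mathcal O_X)=m$ and the canonical bundle formula gives $K_X\numequiv(m-2)F$; and pullback is injective on sections since $g$ is surjective. As for what each approach buys: the paper's construction yields genus-one curves on surfaces of small, explicitly controlled Picard number and comes in a one-dimensional family, whereas yours yields rational curves and has the virtue of unifying this theorem with Theorem~\ref{ThmB} as its $g=0$ case (a case the paper's Theorem~\ref{ThmB} omits for $m=1$, which you correctly cover separately).
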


\begin{proof}
   Let $E$ be an elliptic curve without complex multiplication,
   and let $A$ be the abelian surface $E\times E$. We denote by
   $F_1$ and $F_2$ the fibers of the projections and by $\Delta$
   the diagonal in $A$. It is shown in
   \cite[Proposition~2.3]{BauSch08} that every elliptic curve on
   $A$ that is not a translate of $F_1,F_2$ or $\Delta$
   has numerical
   equivalence class of the form
   $$
      E_{c,d}:=c(c+d)F_1+d(c+d)F_2-cd\Delta,
   $$
   where $c$ and $d$ are suitable coprime integers, and
   conversely, that every such numerical class corresponds to an
   elliptic curve $E_{c,d}$ on $A$. In our construction we will
   make use of a sequence $(E_n)$ of such curves, for instance
   taking $E_n=E_{n,1}$ for $n\ge 2$. No two of the curves $E_n$
   are then translates of each other.

   Fix a positive integer $t$ such that $t^2\ge m$. For each of
   the elliptic curves $E_n$, the number of $t$-division points on
   $E_n$ is $t^2$, and these points are among the $t$-division
   points of $A$. (Actually, the latter is only true if $E_n$ is a
   subgroup of $A$, but this can be achieved by using a translate
   of $E_n$ passing through the origin.) Since the number of
   $t$-division points on $A$ is finite -- there are exactly $t^4$
   of them -- there must exist a subsequence of $(E_n)$ having the
   property that all curves $E_n$ in the subsequence have the same
   set of $t$-division points, say $\{e_1,\dots,e_{t^2}\}$.

   Consider now the blow-up $f:X\to A$ at
   the set $\{e_1,\dots,e_m\}$. The proper transform $C_n$ of
   $E_n$ is then a smooth irreducible curve on $X$ with
   $$
      C_n^2=E_n^2-m=-m,
   $$
   as claimed.
\end{proof}

\begin{remark}
   Note that the proof yields a one-dimensional family of
   surfaces, and that the constructed surfaces are of Picard
   number $m+3$.
\end{remark}

   For each $m\geq1$, the proof above gives a surface $X$ with
   infinitely many curves of genus 1 of self-intersection $-m$. This raises the question of whether
   for each $m\geq1$ and each $g\geq0$ there is a surface $X$ with
   infinitely many curves of genus $g$ of self-intersection $-m$.
   We now show that the answer is yes at least for $m>1$.

\begin{theorem}\label{ThmB}
   For each $m>1$ and each $g\geq0$ there exists a smooth
   projective complex surface
   containing infinitely many smooth irreducible curves of
   self-intersection $-m$ and genus $g$.
\end{theorem}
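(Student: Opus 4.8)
The plan is to realize the desired curves as the sections of a suitably chosen elliptic surface. Recall that if $\pi\colon S\to B$ is a relatively minimal elliptic surface with a section, $B$ smooth of genus $g$, and $S$ has no multiple fibres, then the canonical bundle formula gives $K_S=\pi^{*}D$ for a divisor $D$ on $B$ of degree $2g-2+\chi(\mathcal{O}_S)$; since every section $\sigma$ is isomorphic to $B$ under $\pi$ and meets each fibre once, adjunction yields $\sigma^{2}=2g-2-K_S\cdot\sigma=-\chi(\mathcal{O}_S)$. Thus every element of the Mordell--Weil group $\mathrm{MW}(S/B)$ is a smooth irreducible curve of genus $g$ and self-intersection $-\chi(\mathcal{O}_S)$, and if $\mathrm{MW}(S/B)$ is infinite we obtain infinitely many such curves. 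So it suffices, given $m>1$ and $g\geq 0$, to produce such an $S$ with $\chi(\mathcal{O}_S)=m$ and $\mathrm{MW}(S/B)$ of rank at least $1$.

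To build $S$ I would start from a relatively minimal rational elliptic surface $\rho\colon R\to\mathbb{P}^1$ of positive Mordell--Weil rank (e.g.\ a general pencil of plane cubics blown up at its nine base points, which has rank $8$); fix a section $P$ of infinite order, equivalently a point of infinite order in $E\big(\mathbb{C}(\mathbb{P}^1)\big)$, where $E$ denotes the generic fibre. Let $\Sigma\subset\mathbb{P}^1$ be the finite set of points over which $R$ has singular fibres. Next choose a smooth projective curve $B$ of genus $g$ together with a degree-$m$ morphism $f\colon B\to\mathbb{P}^1$ whose branch locus is disjoint from $\Sigma$: for $m\geq 2$ the Riemann existence theorem produces a connected degree-$m$ cover of $\mathbb{P}^1$ of prescribed genus $g$ with simple branching over any prescribed set of $2g-2+2m\geq 0$ points, which we place in the infinite set $\mathbb{P}^1\setminus\Sigma$. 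Set $S:=R\times_{\mathbb{P}^1}B$ with projections $q\colon S\to R$ and $\pi\colon S\to B$. Over every point of $\mathbb{P}^1$ either $\rho$ is a smooth morphism or $f$ is \'etale, so $S$ is a smooth projective surface; $\pi$ is an elliptic fibration, and since $f$ ramifies only over smooth fibres of $R$ (where base change introduces no multiple fibre) while being \'etale over $\Sigma$ (so those singular fibres are reproduced unchanged, with no $(-1)$-curves in them), the fibration $\pi$ is relatively minimal and has no multiple fibres. The section $P$ pulls back to a section $\widetilde P\colon B\to S$, $b\mapsto (P(f(b)),b)$, of $\pi$.

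It then remains to run two computations. First, for the relatively minimal fibration $\pi$ one has $e(S)=\sum_{b}e(F^S_b)$ over the singular fibres; since $f$ is unramified over $\Sigma$, each singular fibre of $R$ over $p\in\Sigma$ is reproduced at the $m$ points of $f^{-1}(p)$ with unchanged Euler number, and all other fibres of $S$ are smooth, so $e(S)=m\cdot e(R)=12m$. Together with $K_S^2=0$ (true for any relatively minimal elliptic surface) and Noether's formula this gives $\chi(\mathcal{O}_S)=e(S)/12=m$. Second, $\widetilde P$ corresponds to the image of $P$ under the map $E\big(\mathbb{C}(\mathbb{P}^1)\big)\to E\big(\mathbb{C}(B)\big)$, which is injective (it is the restriction of the inclusion into $E$ of the algebraic closure), so $\widetilde P$ again has infinite order; hence $\mathrm{MW}(S/B)$ has rank at least $1$ and the multiples $n\widetilde P$, $n\in\mathbb{Z}$, are pairwise distinct sections. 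By the first paragraph each $n\widetilde P$ is a smooth irreducible curve on $S$ of genus $g$ and self-intersection $-\chi(\mathcal{O}_S)=-m$, which proves the theorem.

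The only genuinely delicate point is keeping the base-changed surface $S$ smooth, relatively minimal and free of multiple fibres while pinning $\chi(\mathcal{O}_S)$ down to exactly $m$; both are handled by the single device of taking $f$ \'etale over the singular fibres of $R$, so that those fibres are simply reproduced $m$-fold and contribute $m\cdot e(R)=12m$ to the Euler number, and ramified only over smooth fibres, where base change is harmless. The remaining input---existence of a degree-$m$ cover $f\colon B\to\mathbb{P}^1$ with $B$ of genus $g$ and controlled branch locus for every pair $(g,m)$ with $m\geq 2$---is a routine application of the Riemann existence theorem, and it is precisely here that the hypothesis $m>1$ is used.
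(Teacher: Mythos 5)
Your proposal is correct and follows essentially the same route as the paper: a rational elliptic surface with infinitely many sections, base-changed along a degree-$m$ cover $B\to\mathbb P^1$ of genus $g$ that is unramified over the singular fibres, so that $\chi(\mathcal O_S)=m$ and every section has self-intersection $-\chi(\mathcal O_S)=-m$ by adjunction. The only (immaterial) difference is that you produce infinitely many sections as Mordell--Weil multiples of a single pulled-back section of infinite order, whereas the paper simply pulls back all of the infinitely many sections of the rational elliptic surface.
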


\begin{proof}
   Let $f:X\to B$ be a smooth complex projective minimal elliptic surface with section,
   fibered over a smooth base curve $B$ of genus $g(B)$. Then $X$ can have no multiple fibers, so that by Kodaira's well-known result (cf. \cite[V,Corollary~12.3]{BPV}), $K_X$ is a sum of a specific choice of $2g(B) -2 + \chi({\mathcal O}_X)$ fibers of the elliptic fibration. Let $C$ be any section of the elliptic fibration $f$. By adjunction, $C^2 = - \chi({\mathcal O}_X)$.

   Take $X$ to be rational and $f$ to have infinitely many
   sections; for example, blow up the base points of a general pencil of plane cubics.
   Then $\chi({\mathcal O}_X) = 1$, so that $C^2 = -1$ for
   any section $C$.

   Pick any $g \geq 0$ and any $m\geq 2$. Then, as is well-known \cite{Hur91}, there
   is a smooth projective curve $C$ of genus $g$ and a finite morphism $h: C \to B$ of degree $m$ that is
   not ramified over points of $B$ over which the fibers of $f$ are singular.
   Let $Y = X \times_B C$ be the fiber product. Then the projection $p:Y \to C$
   makes $Y$ into a minimal elliptic surface, and each section of $f$ induces a section of
   $p$. By the property of the ramification of $h$, the surface $Y$ is smooth and
   each singular fiber of $f$ pulls back to $m$ isomorphic singular fibers of $p$.
   Since $e(Y)$ is the sum of the Euler characteristics of the
   singular fibers of $p$ (cf. e.g. \cite[III, Proposition~11.4]{BPV}), we obtain from Noether's formula that
   $\chi({\mathcal O}_Y) = e(Y)/12 = me(X)/12 = m\chi({\mathcal O}_X) = m$. Therefore,  for any section
   $D$ of $p$, we have $D^2 = -m$; i.e., $Y$ has infinitely many smooth irreducible curves of genus $g$ and
   self-intersection $-m$.
\end{proof}

\begin{question}
   Is there for each $g>1$ a surface with infinitely many $(-1)$-curves of
   genus $g$?
\end{question}


\section{Negativity of reducible curves}\label{reducible-sect}

   When asking for bounded negativity of curves, it is necessary
   to restrict attention to reduced curves. Irreducibility, however, is not
   an essential hypothesis,
   since by \cite[Proposition 3.8.2]{LS},
   bounded negativity holds for the set of reduced, irreducible curves on a surface $X$
   if and only if it holds for the set of reduced curves on $X$. Here we improve
   this result by obtaining a sharp bound on the negativity for reducible curves, given a bound
   on the negativity for reduced, irreducible curves.

\begin{proposition}
   Let $X$ be a smooth projective surface (over an arbitrary algebraically closed ground field)
   for which there is a
   constant $b(X)$ such that $C^2\ge -b(X)$ for every reduced,
   irreducible curve $C\subset X$. Then
   $$
      C^2\ge -(\rho(X)-1)\cdot b(X)
   $$
   for every reduced curve $C\subset X$, where $\rho(X)$ is the
   Picard number of $X$.
\end{proposition}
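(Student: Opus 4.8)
The plan is to isolate the prime components of a reduced curve that are responsible for its negativity, and to bound their number by $\rho(X)-1$ via the Hodge index theorem, each such component contributing at least $-b(X)$ to the self-intersection. We may as well assume $b(X)\ge 0$ (replace it by $\max\{b(X),0\}$) and $\rho(X)\ge 2$: when $\rho(X)=1$ the intersection form on the N\'eron--Severi space is definite and takes a positive value on the class of an ample divisor, hence is positive definite, so every effective divisor has positive self-intersection and there is nothing to prove.

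So let $C=C_1+\dots+C_n$ be the decomposition of a reduced curve into its distinct prime components, and set
\[
S=\{\,i : C_i\cdot C<0\,\},\qquad D_S=\sum_{i\in S}C_i .
\]
Since distinct prime divisors meet non-negatively, $C_i\cdot C\ge 0$ for $i\notin S$, so from $C^2=\sum_i C_i\cdot C$ one gets $C^2\ge\sum_{i\in S}C_i\cdot C$; if $S=\emptyset$ we are done. For $i\in S$ we have $C_i\cdot C=C_i^2+\sum_{j\neq i}C_i\cdot C_j\ge C_i^2\ge -b(X)$ by hypothesis, whence $C^2\ge-|S|\cdot b(X)$. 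Thus the whole statement reduces to the inequality $|S|\le\rho(X)-1$.

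The heart of the matter — and the step I expect to be the main obstacle — is to show that the intersection matrix $M=(C_i\cdot C_j)_{i,j\in S}$ is negative definite. First I would note that for $i\in S$ every component $C_j$ with $j\notin S$ is distinct from $C_i$, so $C_i\cdot(C-D_S)\ge 0$ and therefore
\[
s_i:=C_i\cdot D_S=C_i\cdot C-C_i\cdot(C-D_S)\le C_i\cdot C<0 ;
\]
that is, every prime component of $D_S$ meets $D_S$ strictly negatively. Writing $m_{ij}=C_i\cdot C_j$ (so $m_{ij}\ge 0$ for $i\neq j$ and $s_i=\sum_{j\in S}m_{ij}$), I would then invoke the elementary quadratic identity
\[
\sum_{i,j\in S}m_{ij}y_iy_j=\sum_{i\in S}s_i\,y_i^2-\sum_{\substack{i,j\in S\\ i<j}}m_{ij}(y_i-y_j)^2
\]
valid for all real $(y_i)_{i\in S}$: since $s_i<0$ and $m_{ij}\ge 0$ the right-hand side is $\le 0$, and it equals $0$ only if $\sum_{i}s_iy_i^2=0$, i.e.\ only if all $y_i=0$. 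Hence $M$ is negative definite, so the classes $[C_i]$, $i\in S$, are linearly independent and span a subspace of the N\'eron--Severi space on which the intersection form is negative definite. By the Hodge index theorem that form has signature $(1,\rho(X)-1)$, so such a subspace has dimension at most $\rho(X)-1$; therefore $|S|\le\rho(X)-1$ and $C^2\ge-(\rho(X)-1)\,b(X)$, as asserted.

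For sharpness one can take $X$ to be $\mathbb P^2$ blown up at $\rho(X)-1$ general points (so that $b(X)=1$) and $C$ the disjoint union of the $\rho(X)-1$ exceptional $(-1)$-curves, giving $C^2=-(\rho(X)-1)$; so the bound cannot be improved. Everything in the argument except the negative-definiteness of $M$ is routine bookkeeping, and the only external inputs are that distinct prime divisors intersect non-negatively and the Hodge index theorem, both valid over an arbitrary algebraically closed field.
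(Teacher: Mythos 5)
Your proof is correct, but it takes a genuinely different route from the paper. The paper's proof invokes the Zariski decomposition $C=P+N$: it discards the nef part via $C^2=P^2+N^2\ge N^2$, uses that the reduced hypothesis forces the coefficients of $N$ to be at most $1$, and gets $r\le\rho(X)-1$ from the (known) negative definiteness of the intersection matrix of $N$. You instead work directly with the set $S$ of components meeting $C$ negatively, bound $C^2\ge -|S|\,b(X)$ by elementary positivity of $C_i\cdot C_j$ for $i\ne j$, and then prove by hand — via the quadratic identity $\sum m_{ij}y_iy_j=\sum_i s_iy_i^2-\sum_{i<j}m_{ij}(y_i-y_j)^2$ — that the Gram matrix of $\{C_i\}_{i\in S}$ is negative definite, so that $|S|\le\rho(X)-1$ follows from the Hodge index theorem. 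Your identity and its use are correct (the strict negativity $s_i<0$ is exactly what kills the degenerate directions), and negative definiteness of the Gram matrix does give linear independence of the classes. What your approach buys is self-containedness: you do not need the existence of the Zariski decomposition (itself a nontrivial theorem) nor the standard facts about its negative part, only the Hodge index theorem and the non-negativity of intersections of distinct prime divisors, both of which hold over any algebraically closed field; the price is a somewhat longer argument, essentially re-proving the relevant fragment of Zariski decomposition theory. One cosmetic caveat: your opening reduction ``replace $b(X)$ by $\max\{b(X),0\}$'' weakens the target inequality when $b(X)<0$, so it is not literally a reduction in that degenerate case; but the statement is clearly intended (and the paper's own proof likewise tacitly requires) $b(X)\ge 0$, so this is not a real gap. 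Your sharpness example coincides with the paper's second example.
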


\begin{proof}
   Consider the Zariski decomposition $C=P+N$ of the reduced
   divisor $C$. Then $C^2 = P^2+N^2 \ge N^2$, as $P$ is nef and
   $P$ and $N$ are
   orthogonal. So the issue is to bound $N^2$. The
   negative part $N$ is of the form $N=a_1 C_1 + \cdots + a_r C_r$,
   where the curves $C_i$ are among the components of $C$ and
   the coefficients $a_i$ are positive rational numbers.
   Note that
   $a_i\le 1$ for all $i$, because $C$ is reduced. Since the intersection
   matrix of $N$ is negative definite, we have $r\le\rho(X)-1$.
   Thus
   $$
      C^2\ge N^2 \ge a_1^2 C_1^2 + \cdots  + a_r^2 C_r^2 \ge -r\cdot b(X)
      \ge -(\rho(X)-1)\cdot b(X),
   $$
   as claimed.
\end{proof}

\begin{example}\rm
   Here is an example of a surface of higher Picard number, for which
   equality holds in the inequality
   $C^2\ge -(\rho(X)-1)\cdot b(X)$ that was established above.
   Consider
   a smooth Kummer surface $X\subset\mathbb P^3$ with 16 disjoint lines
   (or with 16 disjoint smooth rational curves of some degree)
   as in \cite{BarNie94} or in \cite{Bau97}.
   The generic such surface has $\rho(X)=17$, we have $b(X)=-2$,
   and if $C$ is the union of the 16 disjoint curves, then
   $C^2=16\cdot(-2)$.
\end{example}

\begin{example}\rm
   A more elementary example is given by the blow up $X$ of ${\mathbb P}^2$
   at $n\leq 8$ general points, so $\rho(X)=n+1$. Since $-K_X$ is ample, it follows by adjunction
   for any reduced, irreducible curve $C$ that $C^2\geq-1$, so $b(X)=1$. But if $E$ is the union of
   the exceptional curves of the $n$ blown up points, then $E^2=-n=-(\rho(X)-1)\cdot b(X)$.
\end{example}



\bigskip
\small
   Tho\-mas Bau\-er,
   Fach\-be\-reich Ma\-the\-ma\-tik und In\-for\-ma\-tik,
   Philipps-Uni\-ver\-si\-t\"at Mar\-burg,
   Hans-Meer\-wein-Stra{\ss}e,
   D-35032~Mar\-burg, Germany.

\nopagebreak
   \textit{E-mail address:} \texttt{tbauer@mathematik.uni-marburg.de}

\bigskip
   Brian Harbourne,
   Department of Mathematics,
   University of Nebraska-Lincoln,
   Lincoln, NE 68588-0130, USA.

\nopagebreak
   \textit{E-mail address:} \texttt{bharbour@math.unl.edu}

\bigskip
   Andreas Leopold Knutsen,
   Department of Mathematics,
   University of Bergen,
   Johs. Brunsgt. 12,
   N-5008 Bergen, Norway.

\nopagebreak
   \textit{E-mail address:} \texttt{andreas.knutsen@math.uib.no}

\bigskip
   Alex K\"uronya,
   Budapest University of Technology and Economics,
   Mathematical Institute, Department of Algebra,
   Pf. 91, H-1521 Budapest, Hungary.

\nopagebreak
   \textit{E-mail address:} \texttt{alex.kuronya@math.bme.hu}

\medskip
   \textit{Current address:}
   Alex K\"uronya,
   Albert-Ludwigs-Universit\"at Freiburg,
   Mathematisches Institut,
   Eckerstra{\ss}e 1,
   D-79104 Freiburg,
   Germany.

\bigskip
   Stefan M\"uller-Stach,
   Institut f\"ur Mathematik (Fachbereich 08)
   Johannes Gutenberg-Universit\"at Mainz
   Staudingerweg 9
   55099 Mainz, Germany.

\nopagebreak
   \textit{E-mail address:} \texttt{stach@uni-mainz.de}

\bigskip
   Xavier Roulleau,
   D\'epartemento de Mathematica,
   Instituto Superior T\'ecnico,
   Avenida Rovisco Pais,
   1049-001 Lisboa,
   Portugal.

\nopagebreak
   \textit{E-mail address:} \texttt{roulleau@math.ist.utl.pt}

\bigskip
   Tomasz Szemberg,
   Instytut Matematyki UP,
   Podchor\c a\.zych 2,
   PL-30-084 Krak\'ow, Poland.

\nopagebreak
   \textit{E-mail address:} \texttt{tomasz.szemberg@uni-due.de}


\end{document}